\definecolor{mylinkcolor}{rgb}{0.55,0.0,0.0}
\definecolor{myurlcolor}{rgb}{0.0,0.0,0.55}
\newtheorem{theorem}{Theorem}[section]
\newtheorem{lemma}[theorem]{Lemma}
\theoremstyle{definition}
\newtheorem{definition}[theorem]{Definition}
\newtheorem{example}[theorem]{Example}
\newtheorem{proposition}[theorem]{Proposition}
\theoremstyle{remark}
\newtheorem{remark}[theorem]{Remark}
\numberwithin{equation}{section}
\newcommand{\Aut}{\operatorname{Aut}}
\newcommand{\End}{\operatorname{End}}
\newcommand{\et}{\mathrm{et}}
\newcommand{\Exp}{\mathrm{E}}
\newcommand{\Gal}{\mathrm{Gal}}
\newcommand{\Jac}{\operatorname{Jac}}
\newcommand{\lcm}{\operatorname{lcm}}
\newcommand{\topo}{\operatorname{top}}
\newcommand{\Trace}{\operatorname{Trace}}
\newcommand{\GL}{\mathrm{GL}}
\newcommand{\GSp}{\mathrm{GSp}}
\newcommand{\Hg}{\mathrm{Hg}}
\newcommand{\Lef}{\operatorname{L}}
\newcommand{\MT}{\mathrm{MT}}
\newcommand{\PSL}{\mathrm{PSL}}
\newcommand{\PSU}{\mathrm{PSU}}
\newcommand{\SO}{\mathrm{SO}}
\newcommand{\Sp}{\mathrm{Sp}}
\newcommand{\SU}{\mathrm{SU}}
\newcommand{\ST}{\mathrm{ST}}
\newcommand{\RM}{\mathrm{RM}}
\newcommand{\QM}{\mathrm{QM}}
\newcommand{\CM}{\mathrm{CM}}
\newcommand{\IM}{\mathrm{IM}}
\newcommand{\Sym}{\operatorname{Sym}}
\newcommand{\TL}{\operatorname{TL}}
\newcommand{\Unitary}{\mathrm{U}}
\newcommand{\USp}{\mathrm{USp}}
\newcommand{\cyc}[1]{{\mathrm{C}_{#1}}}
\newcommand{\cycsup}[2]{{\mathrm{C}_{#1}^{#2}}}
\newcommand{\dih}[1]{{\mathrm{D}_{#1}}}
\newcommand{\alt}[1]{{\mathrm{A}_{#1}}}
\newcommand{\sym}[1]{{\mathrm{S}_{#1}}}
\newcommand{\bA}{{\mathbf{A}}}
\newcommand{\bB}{{\mathbf{B}}}
\newcommand{\bC}{{\mathbf{C}}}
\newcommand{\bD}{{\mathbf{D}}}
\newcommand{\bE}{{\mathbf{E}}}
\newcommand{\bF}{{\mathbf{F}}}
\newcommand{\bG}{{\mathbf{G}}}
\newcommand{\bH}{{\mathbf{H}}}
\newcommand{\bI}{{\mathbf{I}}}
\newcommand{\bJ}{{\mathbf{J}}}
\newcommand{\bK}{{\mathbf{K}}}
\newcommand{\bL}{{\mathbf{L}}}
\newcommand{\bM}{{\mathbf{M}}}
\newcommand{\bN}{{\mathbf{N}}}
\newcommand{\C}{\mathbb C}
\newcommand{\F}{\mathbb F}
\newcommand{\M}{\mathrm{M }}
\newcommand{\Q}{\mathbb Q}
\newcommand{\Qbar}{{\overline{\mathbb Q}}}
\newcommand{\R}{\mathbb R}
\newcommand{\Z}{\mathbb Z}
\newcommand{\Gap}{\textsc{Gap}}
\newcommand{\Magma}{\textsc{Magma}}
\newcommand\acc[2]{\ensuremath{{}^{#1}\hskip-0.3ex{#2}}}
\newcommand{\ttimes}{{\hspace{-0.5pt}\times\hspace{0pt}}}
\begin{document}

\title[Sato--Tate groups of abelian threefolds: a preview]{Sato--Tate groups of abelian threefolds: \\
a preview of the classification}

\author{Francesc Fit\'e}
\address{Department of Mathematics,
Massachusetts Institute of Technology,
77 Massachusetts Ave., Cambridge, MA 02139, USA}
\email{ffite@mit.edu}
\thanks{Fit\'e was supported by the Institute for Advanced Study during 2018--2019 via NSF grant DMS-1638352, and by the Simons Collaboration in Arithmetic Geometry, Number Theory, and Computation (Simons Foundation grant 550033).}

\author{Kiran S. Kedlaya}
\address{Department of Mathematics \\ University of California, San Diego \\ 9500 Gilman Drive \#0112 \\ 
La Jolla \\ CA 92093 \\ USA}
\email{kedlaya@ucsd.edu}
\thanks{Kedlaya was financially supported by the NSF (grant DMS-1802161), UC San Diego (Stefan E. Warschawski professorship), and IAS (visiting professorship 2018--2019).}

\author{Andrew V. Sutherland}
\address{Department of Mathematics,
Massachusetts Institute of Technology,
77 Massachusetts Ave., Cambridge, MA 02139, USA}
\email{drew@math.mit.edu}
\thanks{Sutherland was financially supported by the NSF (grant DMS-1522526) and by the Simons Collaboration in Arithmetic Geometry, Number Theory, and Computation (Simons Foundation grant 550033).}

%    General info
\subjclass{Primary 	11M50; Secondary  11G10, 11G20, 14G10, 14K15}
\date{\today}

%\dedicatory{This paper is dedicated to ...}

\keywords{Sato--Tate groups, abelian threefolds over number fields.}

\begin{abstract}
We announce the classification of Sato--Tate groups of abelian threefolds over number fields; there are 410 possible conjugacy classes of closed subgroups of $\USp(6)$ that occur. 
We summarize the key points of the ``upper bound'' aspect of the classification, and give
a more rigorous treatment of the ``lower bound'' by realizing 33 groups that appear in the classification as maximal cases with respect to inclusions of finite index.
Further details will be provided in a subsequent paper.
\end{abstract}

\maketitle

%\tableofcontents

\section{Introduction}

The \emph{Sato--Tate group} of an abelian variety $A$ over a number field $k$ is a certain compact Lie group which conjecturally governs the distribution of normalized Euler factors of the $L$-function of $A$. For example,
if $A$ is an elliptic curve, then the Sato--Tate group is determined by whether or not $A$ has complex multiplication (and if so, whether the field of complex multiplication is contained in $k$).
In the generic case where $A$ has no complex multiplication, its Sato--Tate group is $\SU(2)$ and the usual Sato--Tate conjecture predicts that the distribution of normalized Frobenius traces of $A$ in the interval $[-2,2]$ is the same as the distributions of traces of random matrices in $\SU(2)$ determined by the Haar measure on $\SU(2)$; this is known when~$k$ is a totally real field \cite{BLGG11}
or a CM field \cite{AAC18}.

The present paper constitutes an announcement of the classification of Sato--Tate groups of abelian threefolds over number fields, in the style of the classification of Sato--Tate groups of abelian surfaces made in \cite{FKRS12}
(building upon \cite{KS08, KS09}).
These groups are closed subgroups of $\USp(6)$ that are defined up to conjugacy; 
we find that there are 410 Sato--Tate groups that occur for abelian threefolds
(compared to 3 for elliptic curves and 52 for abelian surfaces).

Given the dramatically larger size of the classification in dimension 3, it is not feasible to give a complete account here;
we have thus structured this article as a preview of our subsequent paper \cite{FKS} in which we give a complete treatment. Here, we focus on the following steps.

\begin{itemize}
\item
In \S\ref{section: Sato--Tate groups}, we recall some theoretical background needed to make a rigorous definition of the Sato--Tate group; recall a twisting construction that will be needed to realize certain candidate Sato--Tate groups;
and formulate some necessary conditions on Sato--Tate groups that reduce the classification to a finite problem (the \emph{Sato--Tate axioms}).
\item
In \S\ref{section: upper bound}, we state the classification of closed subgroups of $\USp(6)$ satisfying the Sato--Tate axioms to be obtained in \cite{FKS} (Theorem~\ref{theorem: upper bound}); there are 433 such groups.
We identify the 14 connected groups that can occur, corresponding to the possible geometric endomorphism $\R$-algebras of an abelian threefold over an algebraically closed field of characteristic $0$. For each such group $G^0$, we describe the maximal finite-index overgroups of $G^0$ that occur; the derivation of this description is given completely in all cases except
when $G^0$ is a one-dimensional torus. In this case, we state a uniform presentation of the maximal overgroups
using complex reflection groups, and summarize how the description of finite subgroups of $\PSU(3)$ by Blichfeldt--Dickson--Miller \cite{MBD61} is used to confirm this list in \cite{FKS}.

\item
In \S\ref{section: realization}, we give a complete proof (modulo enumeration of groups; see below) that within the list of 433 groups from
Theorem~\ref{theorem: upper bound}, exactly 410 occur as Sato--Tate groups (Theorem~\ref{theorem: lower bound}).
More precisely, we show that exactly 23 groups must be omitted; these groups have identity components
containing a factor which is a two-dimensional or three-dimensional torus, and the exclusion of these groups
requires a careful study of Shimura's theory of CM types analogous to \cite[\S 4.3]{FKRS12}.
Revising the list of maximal subgroups enumerated in \S\ref{section: upper bound} based on this exclusion
yields a list of 33 candidate groups which are maximal with respect to inclusions of finite index.
Using products and twisting constructions, we show that each maximal group
arises from some abelian threefold over $\Q$ or $\Q(\sqrt 3)$; by base extension, we may obtain examples of abelian threefolds (over various number fields) realizing all candidate groups.

\end{itemize}

We defer to \cite{FKS} the discussion of the following aspects, which for abelian surfaces are treated in \cite{FKRS12}.
\begin{itemize}
\item
Giving a detailed proof of Theorem~\ref{theorem: upper bound}. This amounts to confirming that every group that can occur is contained in one of the announced maximal groups.

\item
Enumerating the finite-index subgroups of the announced maximal groups in order to verify that they represent 433 distinct conjugacy classes within $\USp(6)$. This is needed to confirm that the lower bound in Theorem~\ref{theorem: upper bound} is indeed equal to 410.

\item
Computation of moment statistics and point densities. For abelian surfaces, all 52 Sato--Tate groups can be distinguished by their moments, but this fails for threefolds.
\item
Rigorous determination of the Sato--Tate group of an explicitly specified abelian threefold. One approach to this is to compute
endomorphism algebras using the method of \cite{CMSV19}.
\item
Large-scale surveys of Sato--Tate groups. With the computational resources and software tools now available \cite{Sut18a,Sut18b}, this can now be done at much greater scale than in \cite{FKRS12}.
\item
Distinguishing between Jacobians of curves, principally polarized abelian threefolds, and arbitrary abelian threefolds.
For abelian surfaces this does not change the classification of Sato--Tate groups, as was shown in \cite{FKRS12} using a large scale survey of genus 2 curves over $\Q$.
For threefolds the situation is not yet clear.
\item
Analysis of fields of definition. For abelian surfaces, only 34 of the 52 possible groups occur over $\Q$;
the situation for abelian threefolds is unclear.
\end{itemize}

\section{Background on Sato--Tate groups}
\label{section: Sato--Tate groups}

In this section, we recall some of the theoretical results from \cite{FKRS12}
that form the basis of the classification of Sato--Tate groups of abelian surfaces, and extend these as required to handle the case of abelian threefolds. 
We then use these results to classify the possible identity components of Sato--Tate groups of abelian threefolds.
See \cite{Sut16} for an overview of this circle of ideas.

\subsection{The Mumford--Tate group}

We begin by recalling the definition of the Mumford--Tate group of a (polarized) abelian variety $A$ over a number field $k$.
This construction carries enough information to determine the identity component of the Sato--Tate group.

\begin{definition}
Fix an embedding of $k$ into $\C$
and set $V \colonequals H^1(A_{\C}^{\topo}, \Q)$; it carries an alternating pairing $\psi$ given by the cup product.
Via the description of $A_{\C}^{\topo}$ as a complex torus, we obtain an $\R$-linear identification of $V \otimes_{\Q} \R$ with the tangent space of $A_{\C}$ at the origin (or equivalently, the dual of the space of holomorphic differentials on $A_{\C}$). The \emph{Mumford--Tate group} of $A$, denoted $\MT(A)$, is the smallest $\Q$-algebraic subgroup of $\GL(V)$ whose base extension to $\R$ contains the scalar action of $\C^\times$ on
$V \otimes_{\Q} \R$; this group is contained in the symplectic group $\GSp(V, \psi)$.
It follows from Deligne's theorem on absolute Hodge cycles (see \cite{DM82})
that the definition of $\MT(A)$ does not depend on the choice of the embedding of $k$ into $\C$; it is also clearly invariant under enlargement of $k$.
\end{definition}

\begin{definition}
The \emph{Hodge group} $\Hg(A)$ is the intersection $\MT(A) \cap \Sp(V, \psi)$.
The \emph{Lefschetz group} $\Lef(A)$ is the
connected component of the identity in the centralizer of $\End(A_{\overline{k}})_{\Q}$ in $\Sp(V, \psi)$.
There is an obvious inclusion $\Hg(A) \subseteq \Lef(A)$.
\end{definition}

\begin{proposition} \label{Hodge equals Lefschetz}
For $g \leq 3$, we have $\Hg(A) = \Lef(A)$. (That is, the Mumford--Tate group is determined by 
endomorphisms.)
\end{proposition}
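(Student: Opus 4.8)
The plan is to prove both inclusions, the forward one being immediate and the reverse one carrying all the content. For $\Hg(A)\subseteq\Lef(A)$: every element of $\End(A_{\overline k})_\Q$ is an endomorphism of the rational Hodge structure $V$, hence commutes with the Hodge cocharacter $h\colon\C^\times\to\GL(V\otimes_\Q\R)$; since $\MT(A)$ is by definition the smallest $\Q$-subgroup whose real points contain the image of $h$, it too commutes with $\End(A_{\overline k})_\Q$. Thus $\Hg(A)=\MT(A)\cap\Sp(V,\psi)$ lands in the centralizer of $\End(A_{\overline k})_\Q$ in $\Sp(V,\psi)$, and being connected it lies in $\Lef(A)$. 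The whole difficulty is therefore to establish $\Lef(A)\subseteq\Hg(A)$, equivalently that the inclusion of reductive $\Q$-groups $\Hg(A)\subseteq\Lef(A)$ is an equality of identity components; dually, that every Hodge class on every self-product $A^n$ is a Lefschetz class (lies in the subalgebra of cohomology generated by divisor classes).

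Since both groups are invariant under base extension and under isogeny, I would first pass to $\overline k$ and decompose $A$ up to isogeny as $\prod_i A_i^{n_i}$ with the $A_i$ simple and pairwise non-isogenous. A Morita/double-centralizer computation identifies $\Lef(A)$ with $\prod_i\Lef(A_i)$, and functoriality of the Mumford--Tate construction gives an inclusion $\Hg(A)\hookrightarrow\prod_i\Hg(A_i)$ whose projection to each factor is surjective. The reverse inclusion then splits into two tasks: the simple case $\Hg(A_i)=\Lef(A_i)$ for each $i$, and an independence statement $\Hg(A)=\prod_i\Hg(A_i)$. I would settle the latter by a Goursat-type argument: once the groups $\Lef(A_i)=\Hg(A_i)$ are known explicitly, the only obstruction to the product being the full Hodge group is a common quotient shared by two factors, which for $g\le 3$ is controlled by the absence of isogenies (and of coincident CM fields) among the $A_i$.

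For a simple $A$ of dimension $g\le 3$ I would run through Albert's classification of the division algebra $D=\End(A_{\overline k})_\Q$ (totally real field, indefinite quaternion, definite quaternion, CM algebra), exploiting that $g\in\{1,2,3\}$ sharply restricts the admissible pairs $(D,g)$ and the $D$-module structure of $V$. In each case $\Lef(A)$ is the connected centralizer of $D$ in $\Sp(V,\psi)$ — a product of unitary, symplectic, or orthogonal factors dictated by the type — and the content is that $\MT(A)$ is as large as the endomorphisms and the polarization allow, so that $\Hg(A)$ fills out this centralizer. I would verify equality type by type, invoking the explicit determination of Hodge groups of simple abelian varieties of dimension at most $3$ (following Moonen--Zarhin).

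I expect the genuine obstacle to lie in the CM and imaginary-quadratic cases, where exceptional Hodge classes of Weil type threaten to enlarge $\Lef(A)$ beyond $\Hg(A)$. The decisive point is a parity phenomenon: Weil-type exceptional classes attached to multiplication by an imaginary quadratic (or CM) field $K$ require the $K$-dimension of $H^1$ to be even, whereas for $g=3$ this dimension is odd, so no such classes can arise. This is precisely what makes $g\le 3$ the correct threshold, the equality first failing at $g=4$ by the examples of Weil and Mumford. Ruling out every exceptional class for each admissible CM type in dimension at most $3$ is where I would concentrate the effort, treating the totally real and quaternionic types — where no exceptional classes occur — as comparatively routine.
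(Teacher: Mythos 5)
Your proposal is correct and ultimately takes the same route as the paper: the paper's entire proof is a citation of \cite[Theorem~2.16]{FKRS12}, whose mathematical content is precisely the Moonen--Zarhin determination of Hodge groups of abelian varieties of dimension at most $3$ that you invoke for the simple cases. Your easy inclusion, the Morita/Goursat reduction to simple factors, the Albert-type case analysis, and the parity argument ruling out Weil-type classes are a faithful unpacking of that cited result rather than a genuinely different argument.
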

\begin{proof}
See \cite[Theorem~2.16]{FKRS12}.
\end{proof}

\begin{remark}
As shown by Mumford \cite{Mum69}, Proposition~\ref{Hodge equals Lefschetz} fails for $g=4$.  The case $g=3$ considered here is the last case where the endomorphism-based methods developed in \cite{FKRS12} and extended here and in \cite{FKS} will suffice.
\end{remark}

\subsection{Definition of the Sato--Tate group}

We continue with the definition of the algebraic Sato--Tate group and Sato--Tate group of an abelian variety $A$ over 
a number field $k$ in
terms of $\ell$-adic monodromy, and the statement of the refined Sato--Tate conjecture.
This material is taken primarily from \cite[\S 2.1]{FKRS12}, but includes some further results from \cite[\S 2]{BK15} and \cite{BaK16} that we note here.

We denote by $\End(A)$ the ring of endomorphisms of $A$ (defined over $k$). By the endomorphism field of $A$, we refer to the minimal extension $K$ of $k$ such that $\End(A_K)$ coincides with the geometric endomorphism ring $\End(A_{\overline k})$ of $A$. Note that $K/k$ is a finite and Galois extension.

\begin{definition}
For each $\tau \in \Gal(K/k)$, define
\[
\Lef(A, \tau) \colonequals \{\gamma \in \Sp(V, \psi): \gamma^{-1} \alpha \gamma = \tau(\alpha) \mbox{ for all } \alpha \in \End(A_{K})\}.
\]
The \emph{twisted Lefschetz group} $\TL(A)$ is the union of $\Lef(A, \tau)$ over all $\tau$.
\end{definition}

\begin{proposition} \label{MT conjecture in dim 3}
For $g \leq 3$, for any prime $\ell$, $\TL(A) \otimes_{\Q} \Q_\ell$ is the kernel of the symplectic character 
on the Zariski closure of the image of $\rho_\ell: G_k \to H^1_{\et}(A_{\overline{k}}, \Q_\ell)$.
\end{proposition}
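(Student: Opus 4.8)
The plan is to read this as the algebraic Sato--Tate identity in dimension $\le 3$ and to prove the two opposite inclusions between $\TL(A)\otimes_\Q\Q_\ell$ and the group $G_\ell^1$, where I write $G_\ell$ for the Zariski closure of the image of $\rho_\ell$ in $\GSp(V,\psi)_{\Q_\ell}$ and $G_\ell^1=G_\ell\cap\Sp(V,\psi)_{\Q_\ell}$ for the kernel of the symplectic character on $G_\ell$.

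The inclusion $G_\ell^1\subseteq\TL(A)\otimes_\Q\Q_\ell$ is formal. For $\sigma\in G_k$, compatibility of the Galois action with the action of endomorphisms on $H^1_\et(A_{\overline k},\Q_\ell)$ shows that conjugation by $\rho_\ell(\sigma)$ induces on $\End(A_K)$ the automorphism attached to $\sigma|_K$; hence $\rho_\ell(\sigma)\in\widetilde{\Lef}(A,\tau)_{\Q_\ell}$ for the corresponding $\tau\in\Gal(K/k)$, where $\widetilde{\Lef}(A,\tau)$ is defined exactly as $\Lef(A,\tau)$ but with $\GSp(V,\psi)$ in place of $\Sp(V,\psi)$. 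The finite union $\bigcup_\tau\widetilde{\Lef}(A,\tau)_{\Q_\ell}$ is a union of cosets of a Zariski-closed subgroup, hence closed, so it contains $G_\ell$; intersecting with the kernel of the symplectic character and using $\widetilde{\Lef}(A,\tau)\cap\Sp(V,\psi)=\Lef(A,\tau)$ yields $G_\ell^1\subseteq\bigcup_\tau\Lef(A,\tau)_{\Q_\ell}=\TL(A)\otimes_\Q\Q_\ell$.

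For the reverse inclusion I first treat identity components: the Mumford--Tate conjecture, known for $g\le 3$, gives $G_\ell^0=\MT(A)\otimes_\Q\Q_\ell$, so that $(G_\ell^1)^0=G_\ell^0\cap\Sp(V,\psi)=\Hg(A)\otimes_\Q\Q_\ell$, which by Proposition~\ref{Hodge equals Lefschetz} equals $\Lef(A)\otimes_\Q\Q_\ell=(\TL(A)\otimes_\Q\Q_\ell)^0$. I then realize each component. Since $K/k$ is Galois, being the endomorphism field, the map $G_k\to\Gal(K/k)$ is surjective, so for each $\tau$ I may pick $\sigma$ with $\sigma|_K=\tau$, giving $\rho_\ell(\sigma)\in G_\ell\cap\widetilde{\Lef}(A,\tau)_{\Q_\ell}$. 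Correcting $\rho_\ell(\sigma)$ by a suitable element of $G_\ell^0=\MT(A)$, on which the symplectic character is surjective, produces a point of $G_\ell^1$ lying in the coset $\Lef(A,\tau)_{\Q_\ell}$; thus every class in $\Gal(K/k)$ is attained by $G_\ell^1$.

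The hard part is the resulting comparison of component groups. Combined with the easy inclusion and the equality of identity components just established, what is needed is that $\TL(A)$ has no components beyond those indexed by $\Gal(K/k)$; equivalently, that the centralizer $\Lef(A,\mathrm{id})$ of $\End(A_{\overline k})_\Q$ in $\Sp(V,\psi)$ is connected, so that $\Lef(A,\mathrm{id})=\Lef(A)=\TL(A)^0$ and $\pi_0(\TL(A))\cong\Gal(K/k)$. I would deduce this connectedness from the Albert classification of the endomorphism algebra: for $g\le 3$ the centralizer is in every case a product of symplectic groups, unitary groups, and norm-one groups $\SL_1(D)$ of quaternion algebras, all of which are connected. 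The one delicate point is the quaternionic (Type III) case, where the a priori orthogonal-looking centralizer is in fact the connected group $\SL_1(D)$ rather than a full orthogonal group. Granting connectedness, the easy inclusion forces $\pi_0(G_\ell^1)\subseteq\pi_0(\TL(A))=\Gal(K/k)$, while the previous paragraph shows every class in $\Gal(K/k)$ is attained; hence $G_\ell^1$ meets every component of $\TL(A)_{\Q_\ell}$ and, sharing its identity component, coincides with it.
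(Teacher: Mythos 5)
Your strategy is sound, and it supplies more than the paper does: the paper's entire proof of this proposition is a one-line citation of \cite[Theorem~6.11]{BK15}, and what you have written is essentially a reconstruction of the reduction carried out there (the formal inclusion $G_\ell^1 \subseteq \TL(A)\otimes_\Q\Q_\ell$, identification of identity components via the Mumford--Tate conjecture and Proposition~\ref{Hodge equals Lefschetz}, realization of every class of $\Gal(K/k)$ inside $G_\ell^1$, and a component count resting on connectedness of the centralizer $\Lef(A,\mathrm{id})$ of $\End(A_{\overline k})_\Q$ in $\Sp(V,\psi)$). Two routine points should still be made explicit: the correction of $\rho_\ell(\sigma)$ by an element of $\MT(A)$ with prescribed similitude factor must be done with $\overline{\Q}_\ell$-points, since the cyclotomic character value need not be a square in $\Q_\ell^\times$ (this suffices because you are comparing reduced algebraic groups in characteristic zero); and distinct $\tau$ give disjoint sets $\Lef(A,\tau)$ because $\Gal(K/k)$ acts faithfully on $\End(A_K)$ by minimality of $K$, so that connectedness of $\Lef(A,\mathrm{id})$ really yields $\pi_0(\TL(A)) \cong \Gal(K/k)$ and not merely a surjection onto it.

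The genuine flaw is in the step you yourself single out as delicate. You claim that in the quaternionic ``Type III'' case the a priori orthogonal-looking centralizer is in fact the connected group $\SL_1(D)$. This has the two quaternionic Albert types crossed, and as stated it is false. For Albert Type III (a totally definite quaternion algebra, whose positive Rosati involution is the canonical one, of symplectic type), the involution induced on the centralizer algebra is of \emph{orthogonal} type, so the centralizer in $\Sp(V,\psi)$ is a form of an even orthogonal group and is genuinely disconnected; were Type III to occur, your component count would break down at exactly this point. What rescues the statement is non-existence, not connectedness: by Shimura's refinement of Albert's classification (the refinement invoked in \S\ref{section: endomorphism rings}), Type III does not occur for $g \leq 3$. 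The quaternionic case that \emph{does} occur for $g \leq 3$ is Type II---an indefinite quaternion algebra, e.g.\ a QM abelian surface factor---where the Rosati involution is of orthogonal type, the induced involution on the centralizer is then of symplectic type, and the centralizer is $\SL_1$ of a quaternion algebra, hence connected. With the case analysis repaired in this way (Type I and the matrix-algebra cases give symplectic groups or restrictions of scalars thereof, Type IV gives unitary groups and tori, Type II gives $\SL_1(D)$, and Type III is absent), your connectedness claim is correct and the rest of your proof goes through.
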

\begin{proof}
This amounts to the statement that the Mumford--Tate conjecture holds for $A$ whenever $g \leq 3$. See for example 
\cite[Theorem~6.11]{BK15}.
\end{proof}

\begin{definition}
In light of Proposition~\ref{Hodge equals Lefschetz} and Proposition~\ref{MT conjecture in dim 3},
for $g \leq~3$, we define the \emph{Sato--Tate group} $\ST(A)$ to be a maximal compact subgroup of $\TL(A) \otimes_{\Q}~\C$.
\end{definition}

\begin{lemma}\label{lemma: Galois action of component group}
For $g \leq 3$, there is a canonical isomorphism $\ST(A)/\ST(A)^0 \to \Gal(K/k)$. In particular, this isomorphism is compatible with base change: for any finite extension $k'$ of $k$, 
$\ST(A_{k'})$ is the inverse image of $\Gal(k'K/k') \subseteq \Gal(K/k)$ in $\ST(A)$.
\end{lemma}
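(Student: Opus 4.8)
The plan is to analyze the algebraic group $\TL(A)$ first, then transport the conclusion to $\ST(A)$ through maximal compact subgroups, and finally deduce the base-change statement from naturality. First I would record that each nonempty $\Lef(A,\tau)$ is a single coset of the centralizer $C\colonequals\Lef(A,\mathrm{id})=Z_{\Sp(V,\psi)}(\End(A_K)_\Q)$: if $\gamma_0\in\Lef(A,\tau)$ and $\gamma\in C$, then $\gamma_0\gamma\in\Lef(A,\tau)$ because $\tau$ stabilizes $\End(A_K)$ and $\gamma$ centralizes it. Since $K$ is the \emph{minimal} field of definition of the geometric endomorphisms, $\Gal(K/k)$ acts faithfully on $\End(A_K)$, so $\gamma\mapsto\tau$ (for $\gamma\in\Lef(A,\tau)$) is a well-defined map $p\colon\TL(A)\to\Gal(K/k)$; the defining relation $\gamma^{-1}\alpha\gamma=\tau(\alpha)$ shows directly that $p$ is a homomorphism (up to composing with inversion to correct the order reversal) with kernel exactly $C$. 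In particular $\TL(A)^0=C^0$, which is $\Lef(A)=\Hg(A)$ by the definition of the Lefschetz group together with Proposition~\ref{Hodge equals Lefschetz}.

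The heart of the matter is to upgrade $p$ to an isomorphism on component groups, i.e.\ to show that the component group $\TL(A)/C^0$ is $\Gal(K/k)$ rather than some extension of it. This needs two inputs. Surjectivity of $p$ I would extract from Proposition~\ref{MT conjecture in dim 3}: identifying $\TL(A)\otimes_\Q\Q_\ell$ with the kernel of the symplectic character $\chi$ on the $\ell$-adic monodromy group, any $\tau\in\Gal(K/k)$ is realized by a suitable similitude-normalized image of a Frobenius lifting $\tau$, because Galois acts on $\End(A_{\overline k})\otimes_\Q\Q_\ell$ through $\Gal(K/k)$. The remaining and decisive input is that $C$ is connected, so that $C^0=C=\ker(p)$ and hence the component group of $\TL(A)$ is $\Gal(K/k)$.

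The connectedness of $C$ is the main obstacle, and it is not formal: the centralizer in $\Sp(V,\psi)$ of a semisimple $\Q$-algebra with positive involution decomposes, over $\overline\Q$, into a product of general linear, symplectic, and orthogonal factors, and a genuine orthogonal factor would make $C$ disconnected and force the component group of $\TL(A)$ to be a proper extension of $\Gal(K/k)$. I would dispose of this by going through the finitely many possibilities for $\End(A_{\overline k})_\Q$ when $g\le 3$ and checking that the resulting Lefschetz group $\Lef(A)$ always exhausts its own ambient centralizer; equivalently, this is the assertion that the connected monodromy field coincides with the endomorphism field $K$ in this range. (For $g=2$ this is the corresponding statement in \cite{FKRS12}; for $g=3$ it rests on the explicit list of connected groups occurring as identity components.) With $C$ connected in hand, I would pass to $\ST(A)$ using the standard fact that a maximal compact subgroup $G$ of a possibly disconnected complex reductive group $\mathcal G$ has $\pi_0(G)\cong\pi_0(\mathcal G)$ and $G^0$ maximal compact in $\mathcal G^0$; applied to $\mathcal G=\TL(A)\otimes_\Q\C$ this yields the canonical isomorphism $\ST(A)/\ST(A)^0\cong\Gal(K/k)$, with $\ST(A)^0$ a maximal compact subgroup of $\Hg(A)\otimes_\Q\C$.

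For the base-change assertion I would argue by naturality. A finite extension $k'/k$ leaves the geometric endomorphism ring unchanged, so the endomorphism field of $A_{k'}$ is the compositum $k'K$, and restriction embeds $\Gal(k'K/k')$ into $\Gal(K/k)$ as the stabilizer of $K\cap k'$. Because $V$, $\psi$, and $\End(A_{\overline k})$ are common to $A$ and $A_{k'}$, and each $\tau'\in\Gal(k'K/k')$ acts on $\End(A_{\overline k})$ through $\tau'|_K$, the defining relations give $\Lef(A_{k'},\tau')=\Lef(A,\tau'|_K)$. Hence $\TL(A_{k'})$ is precisely the union of the components of $\TL(A)$ indexed by $\Gal(k'K/k')$, namely $p^{-1}(\Gal(k'K/k'))$; taking maximal compact subgroups identifies $\ST(A_{k'})$ with the inverse image of $\Gal(k'K/k')$ in $\ST(A)$, compatibly with the two isomorphisms to Galois groups.
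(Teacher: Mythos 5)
Your proposal is correct in substance but takes a genuinely different route from the paper: the paper's entire proof is a citation to \cite[Theorem~6.11]{BK15}, whereas you reconstruct the content of that theorem directly. Your decomposition --- (i) each nonempty $\Lef(A,\tau)$ is a coset of the centralizer $C$, so $p\colon \TL(A)\to\Gal(K/k)$ is a well-defined (anti)homomorphism with kernel $C$ by minimality of $K$; (ii) surjectivity of $p$ via Proposition~\ref{MT conjecture in dim 3}, normalizing the image of a lift of $\tau$ by a homothety (which lies in the Zariski closure by Bogomolov) so that conjugation still induces $\tau$ on endomorphisms; (iii) connectedness of $C$, so that $\pi_0(\TL(A))=\Gal(K/k)$; (iv) transfer to $\ST(A)$ by the fact that maximal compacts of a group with finitely many components meet every component; and (v) base change from $\Lef(A_{k'},\tau')=\Lef(A,\tau'|_K)$ --- is sound at every step, and you correctly isolate (iii) as the only non-formal input, including the right reason an obstruction could arise (orthogonal factors of the centralizer, i.e.\ Albert Type~III, which is absent for $g\le 3$). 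What your approach buys is transparency: it makes visible exactly where $g\le 3$ is used, which the paper's one-line citation hides; what the citation buys is that \cite{BK15} has already carried out precisely the verification you defer. That deferral is the one caveat: your step (iii) is asserted as a finite case check over the possible $\End(A_{\overline k})_\Q$ (equivalently, the coincidence of the connected monodromy field with $K$) but not executed, and that check is exactly the mathematical content packaged in the cited theorem. Since the case list is finite and each centralizer is visibly a product of symplectic groups, norm-one groups of quaternion algebras, and norm-one tori of CM fields --- all connected --- your plan closes, but as written it still leans on the same computation the paper outsources.
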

\begin{proof}
This is again a consequence of \cite[Theorem~6.11]{BK15}.
\end{proof}

\begin{remark} \label{remark: general ST group}
In the definition of the Sato--Tate group, we are implicitly using the fact that for $g \leq 3$, 
the Mumford--Tate group of $A$ is determined by endomorphisms. For general $g$, it is expected that the role of
$\TL(A)$ in Proposition~\ref{MT conjecture in dim 3} can be filled by a certain algebraic group over $\Q$,
the \emph{motivic Sato--Tate group}, whose definition involves algebraic cycles on $A \times A$ of all codimensions, not just endomorphisms \cite{BaK16}. There is still a canonical surjection $\ST(A)/\ST(A)^0 \to \Gal(K/k)$,
but it is not in general an isomorphism.
\end{remark}

\subsection{Twisting and the Sato--Tate group}

\begin{definition}
For $L/k$ a finite Galois extension and $\xi$ a $1$-cocycle of $\Gal(L/k)$ valued in $\Aut(A_L)$, 
there exists a unique (up to unique isomorphism) abelian variety $A_\xi$ over $k$ and an isomorphism
$\theta: A_{\xi,L} \to A_L$ such that
\[
\xi(\sigma) = \theta^\sigma \circ \theta^{-1} \qquad (\text{for all }\sigma \in \Gal(L/k)).
\]
Moreover, $A_\xi$ depends only on the class of $\xi$ in the pointed set $H^1(\Gal(L/k), \Aut(A_L))$.
\end{definition}

\begin{example}\label{example: twistcube}
Let $E$ be an elliptic curve without complex multiplication defined over $\Q$ and let $L/\Q$ be a Galois extension. Let $A=E^d$, and fix an isomorphism $\Aut(A_L)\simeq \GL_d(\Z)$ for some integer $d\geq 1$. For every rational degree~$d$ Artin representation $\rho$, there is a choice of a basis for which the image of $\rho$ lies in $\GL_d(\Z)$. We may thus regard a rational degree $d$ representation of $\Gal(L/\Q)$ as a 1-cocycle 
$$
\xi\colon \Gal(L/\Q) \rightarrow \Aut(A_L)\,.
$$
The endomorphism field of the abelian $d$-fold $A_\xi$ defined above is the field cut out by the representation $\rho\otimes \rho^{\vee}$. The Galois group of the endomorphism field is thus isomorphic to the projective image of the original degree~$d$ rational representation $\rho$ of $\Gal(L/\Q)$. We will use this construction in \S\ref{section: diagonalprodreal}.
\end{example}

\begin{example} \label{example: twistcube CM}
Similarly, let $E$ be an elliptic curve over $\Q$ with complex multiplication by the maximal order of the imaginary quadratic field $M$ (which must have class number one, since $E$ is defined over $\Q$). Let $L/\Q$ be a Galois extension containing $M$ for which the extension
\[
1 \to \Gal(L/M) \to \Gal(L/\Q) \to \Gal(M/\Q) \to 1
\]
is split.
Let $A=E^d$, and fix an isomorphism $\Aut(A_L)\simeq \GL_d(\mathcal O_M)$ for some integer $d\geq 1$. 
For the action of $\cyc 2$ on $\GL_d(M)$ via complex conjugation on $M$,
suppose that $\tilde{\rho}: \Gal(L/\Q) \to \GL_d(M) \rtimes \cyc 2$ is a representation whose restriction $\rho$ to $\Gal(L/M)$ factors through
$\GL_d(M)$.
Since $\mathcal O_M$ has class number 1, there is a choice of basis of $M^d$ for which $\rho$ factors through a representation $\rho_0: \Gal(L/M) \to \GL_d(\mathcal O_M)$.

Suppose further that the basis of $M^d$ can be chosen so that $\tilde{\rho}$ factors through
$\GL_d(\mathcal O_M) \rtimes \cyc 2$ (see Remark~\ref{remark: twisting and principal polarizations}). We then obtain from $\tilde{\rho}$ a cocycle
$$
\xi\colon \Gal(L/\Q) \rightarrow \Aut(A_L)
$$
and again the Galois group of the endomorphism field of $A_\xi$ is isomorphic to the projective image of $\Gal(L/\Q)$.
We will use this construction in \S\ref{section: diagonalprodimag}.
\end{example}

\begin{remark} \label{remark: twisting and principal polarizations}
In Example~\ref{example: twistcube}, the condition that $\tilde{\rho}$ can be factored through
$\GL_d(\mathcal O_M) \rtimes \cyc 2$ is equivalent to requiring the existence of a choice of $\rho_0$ for which
\begin{equation} \label{eq: integral compatibility}
\rho_0 \cong c_M \circ \rho_0 \circ c,
\end{equation}
where $c$ denotes the action of $\Gal(M/\Q)$ on $\Gal(L/M)$ while $c_M$ denotes complex conjugation on $\mathcal O_M^d$.
The following considerations will be useful.
\begin{itemize}
\item[(i)]
We automatically have \eqref{eq: integral compatibility} if $\rho_0$ is uniquely determined by $\rho$.
(This can only occur if $\rho$ is absolutely irreducible.)
\item[(ii)]
We also have \eqref{eq: integral compatibility} if $\tilde{\rho}$ descends to $\GL_d(\Z) \times \cyc 2$ and $\Gal(M/\Q)$ acts on $\Gal(L/M)$ as an inner automorphism.
\item[(iii)]
When $\rho_0$ is not uniquely determined by $\rho$, the choices for $\rho_0$ will be distinguished by their reductions modulo some $\alpha \in \mathcal O_M$. Using this, we can sometimes track the effects of $c$ and $c_M$ and thus detect whether or not \eqref{eq: integral compatibility} holds.
\end{itemize}
\end{remark}

\begin{remark} \label{remark: twisting with denominator}
We will encounter a small number of cases where \eqref{eq: integral compatibility} fails. In these cases,
we have an isomorphism $\rho \cong c_M \circ \rho \circ c$ which does not descend to $\rho_0$; 
that is, the image of $\rho_0$ is normalized by some matrix of $\GL_d(M) \setminus \GL_d(\mathcal O_M)$.

In general, such cases correspond to twists not of $E^d$ but of an isogenous abelian variety
(which by \cite[Theorem~2]{Kan11} is itself a product of elliptic curves isogenous to $E$).
We work out two specific constructions of this type.
(Compare \cite[\S 4]{SM74} or \cite[Example~5.15]{Ma11}.)

\begin{itemize}
\item
Let $E$ be the elliptic curve $y^2 = x^3 - 1$; $E$ has CM by the maximal order $\Z[\zeta_3]$
and contains the rational 2-torsion point $Q = (1,0)$,
with the other two 2-torsion points $Q_1, Q_2$ being defined over $M$.
Let $A$ be the quotient of the Weil restriction of $E_M$ from $M$ to $\Q$ by the rational subgroup of order 2 corresponding to $\langle Q_1 \rangle$; then
\begin{align*}
\End(A_M) &\cong
\M_2(\mathcal O_M) \cap \begin{pmatrix} 2^{-1} & 0 \\ 0 & 1 \end{pmatrix} 
\M_2(\mathcal O_M) \begin{pmatrix} 2 & 0 \\ 0 & 1 \end{pmatrix} \\
&= \left\{ \begin{pmatrix} a&b \\ c&d \end{pmatrix} \in M_2(\mathcal O_M): b \equiv 0 \pmod{2} \right\}
\end{align*}
with the action of $\Gal(M/\Q)$ given by
\[
g \mapsto \begin{pmatrix} 2^{-1} & 0 \\ 0 & 1 \end{pmatrix} \overline{g} \begin{pmatrix} 2 & 0 \\ 0 & 1 \end{pmatrix}.
\]
(Note that $A_M \cong E_M \times E'_M$ where $E'_M = E_M / \langle Q_1 \rangle$ has CM by the nonmaximal order $\Z[2\zeta_3]$.)

\item
Put $k = \Q(\sqrt{3})$. Let $E$ be the elliptic curve over $\Q(\sqrt{3})$ with LMFDB label \href{https://www.lmfdb.org/EllipticCurve/2.2.12.1/9.1/a/3}{\texttt{2.2.12.1-9.1-a3}};
it has CM by $\Z[i]$ and has a rational point $Q$ of order 3.
Let $A$ be the product $E \times E/\langle Q \rangle$; then
\begin{align*}
\End(A) &\cong \M_2(\mathcal O_M) \cap \begin{pmatrix} 3^{-1} & 0 \\ 0 & 1 \end{pmatrix} 
\M_2(\mathcal O_M) \begin{pmatrix} 3 & 0 \\ 0 & 1 \end{pmatrix} \\
&= \left\{ \begin{pmatrix} a&b \\ c&d \end{pmatrix} \in M_2(\mathcal O_M): b \equiv 0 \pmod{3} \right\}
\end{align*}
with the action of $\Gal(M/\Q)$ given by
\[
g \mapsto \begin{pmatrix} 3^{-1} & 0 \\ 0 & 1 \end{pmatrix} \overline{g} \begin{pmatrix} 3 & 0 \\ 0 & 1 \end{pmatrix}.
\]
(Note that $E/\langle Q \rangle$ has CM by the nonmaximal order $\mathbb{Z}[3i]$.)
\end{itemize}
\end{remark}

\subsection{Axioms for Sato--Tate groups}

A key tool used to classify Sato--Tate groups in \cite{FKRS12} is a list of necessary conditions
called the \emph{Sato--Tate axioms}. The formulation in \cite[Definition~3.1]{FKRS12} is applicable to
arbitrary motives; we state here a restricted form of the three original Sato--Tate axioms applicable
to the 1-motives associated to abelian threefolds. We also add a fourth axiom coming from the 
fact that the Sato--Tate group of an abelian threefold is determined by endomorphisms
(Proposition~\ref{Hodge equals Lefschetz}).

\begin{definition} \label{Sato--Tate axioms}
For a subgroup $G$ of $\GL_6(\C)$ with identity connected component $G^0$, the \emph{Sato--Tate axioms (for abelian threefolds)} are as follows.
\begin{enumerate}
\item[(ST1)]
The group $G$ is a closed subgroup of $\USp(6)$. For definiteness, we take the latter to be defined with respect to the symplectic form given by the block matrix
\[
J = \begin{pmatrix} 0 & I_3 \\
-I_3 & 0
\end{pmatrix},
\]
where $I_3$ denotes a $3\times 3$ identity matrix, unless otherwise specified.
\item[(ST2)] (Hodge condition)
There exists a homomorphism $\theta\colon \Unitary(1) \to G^0$
such that $\theta(u)$ has eigenvalues $u, u^{-1}$ of multiplicity $3$.
The image of such a $\theta$ is a \emph{Hodge circle}, and the set of all Hodge circles must generate a dense subgroup of $G^0$.
\item[(ST3)] (Rationality condition)
For each component
$H$ of $G$ and each irreducible character $\chi$ of $\GL_6(\C)$,
the expected value (under the Haar measure) of $\chi(\gamma)$ over $\gamma\in H$ is an integer.
In particular, for any positive integers $m$ and $n$, the expectation $\Exp[\Trace(\gamma, \wedge^m \C^{6})^n: \gamma \in H]$ is an integer.
\item[(ST4)] (Lefschetz condition)
The subgroup of $\USp(6)$ fixing $\End(\C^6)^{G^0}$ is $G^0$.
\end{enumerate}
\end{definition}

\begin{proposition} \label{necessity of Sato--Tate}
Let $A$ be an abelian threefold over $k$.
Then $G = \ST_A$ satisfies the Sato--Tate axioms.
\end{proposition}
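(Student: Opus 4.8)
The plan is to check the four axioms separately, treating (ST1)--(ST3) as the three original Sato--Tate axioms of \cite[\S 3]{FKRS12} specialized to the $6$-dimensional symplectic setting, and reserving the substance of the argument for the new axiom (ST4). Throughout I use that, by Proposition~\ref{MT conjecture in dim 3} and the definition of the Sato--Tate group, $G$ is a maximal compact subgroup of $\TL(A) \otimes_{\Q} \C$, and that $\TL(A)^0 = \Lef(A) = \Hg(A)$, so that $G^0$ is a maximal compact subgroup of $\Hg(A) \otimes_{\Q} \C$.

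For (ST1), since $g = 3$ we have $\dim_\Q V = 2g = 6$, so $\TL(A) \subseteq \Sp(V,\psi)$ gives $\TL(A) \otimes_\Q \C \subseteq \Sp_6(\C)$; all maximal compact subgroups of $\Sp_6(\C)$ are conjugate, and one of them is $\USp(6)$ relative to the form $J$, so after conjugation $G \subseteq \USp(6)$. For (ST2), the weight-one Hodge structure on $V$ furnishes a homomorphism from the Deligne torus whose restriction $\theta\colon \Unitary(1) \to \Hg(A)_\R$ acts with eigenvalues $u$ and $u^{-1}$, each of multiplicity $3 = \dim H^{1,0} = \dim H^{0,1}$; transported to the compact form this is a Hodge circle in $G^0$. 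Density of the Hodge circles is exactly the defining property of $\MT(A)$: it is the smallest $\Q$-algebraic group whose real points contain the scalar action, so the conjugates of the image of $\theta$ are Zariski dense in $\Hg(A)$ and therefore generate a dense subgroup of the maximal compact $G^0$.

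For (ST3), given a component $H = \tau G^0$ and an irreducible representation $W$ of $\GL_6(\C)$ with character $\chi$, the Haar-average $\Exp[\chi(\gamma) : \gamma \in H]$ equals the trace of $\tau$ on the invariant space $W^{G^0}$. Because $G^0$ is Zariski dense in the $\Q$-algebraic group $\Hg(A)$ and $W$ is defined over $\Q$, the space $W^{G^0}$ acquires a $\Q$-structure on which $G/G^0 \cong \Gal(K/k)$ (Lemma~\ref{lemma: Galois action of component group}) acts by finite-order rational matrices; the trace of such a matrix is a rational algebraic integer, hence an integer. The case $\tau = 1$ recovers the non-negative integrality of the moments as dimensions of invariant subspaces.

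The main work, and the step I expect to be the principal obstacle, is (ST4), which is precisely where Proposition~\ref{Hodge equals Lefschetz} is needed. The invariants $\End(\C^6)^{G^0}$ form the commutant of $G^0$ in $\End(V) \otimes_\Q \C$, and since $G^0$ is Zariski dense in $\Hg(A)_\C$ this is the commutant of $\Hg(A)$. By Proposition~\ref{Hodge equals Lefschetz}, $\Hg(A) = \Lef(A)$, which is the connected centralizer of $B \colonequals \End(A_{\overline k})_\Q$ in $\Sp(V,\psi)$; a double-centralizer argument using the semisimplicity of $B$ then identifies $\End(\C^6)^{G^0}$ with $B \otimes_\Q \C$. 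It remains to show that the subgroup of $\USp(6)$ centralizing $B \otimes_\Q \C$ is exactly $G^0$. One inclusion is clear, and the identity component of this compact centralizer is the maximal compact subgroup of $\Lef(A)_\C = \Hg(A)_\C$, namely $G^0$; the delicate point is to rule out extra components, that is, to show that the compact centralizer of the endomorphism algebra is connected. I would settle this by a case analysis over the possible geometric endomorphism algebras (equivalently, the possible identity components classified in \S\ref{section: Sato--Tate groups}), verifying in each case that the centralizer of $B$ in $\Sp(V,\psi)$ is connected, so that no orthogonal-type factor contributes a nontrivial component group.
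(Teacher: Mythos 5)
Your proposal is correct and follows the same route as the paper's proof: the paper handles (ST1)--(ST3) by citing \cite[Proposition~3.2]{FKRS12} (your arguments for these three axioms are exactly the ones given there, unfolded, including the observation that the density clause of (ST2) comes from the minimality built into the definition of the Mumford--Tate group), and it proves (ST4) by invoking Proposition~\ref{Hodge equals Lefschetz}, which is also the pivot of your argument.

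The one place where you genuinely add content is (ST4), and your instinct there is sound: Proposition~\ref{Hodge equals Lefschetz} identifies $\Hg(A)$ with $\Lef(A)$, which by definition is only the \emph{identity component} of the centralizer of $B = \End(A_{\overline k})_{\Q}$ in $\Sp(V,\psi)$. It therefore yields $Z_{\USp(6)}(B \otimes \C)^0 = G^0$, whereas (ST4) demands that the \emph{full} centralizer equal $G^0$, i.e.\ that the centralizer be connected; the paper's one-line proof suppresses this step entirely. (The worry is not vacuous: for $\SO(3)$ embedded via $\Unitary(3)$, the subgroup of $\USp(6)$ fixing the commutant is the disconnected group $\mathrm{O}(3)$, which is precisely how (ST4) eliminates that group in \S\ref{section: identity component}.) Your proposed case analysis does succeed, and it can be made uniform instead of case-by-case: a disconnected (orthogonal) factor in the centralizer arises exactly from a factor of $B \otimes \R$ isomorphic to $M_n(\HH)$, that is, an Albert type III factor, and the Albert--Shimura classification recalled in \S\ref{section: endomorphism rings} shows that for $g \leq 3$ every factor of $B \otimes \R$ is a matrix algebra over $\R$ or $\C$; these contribute only symplectic and general linear factors, which are connected. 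The only criticism is that your write-up leaves this step as a declared plan (``I would settle this by a case analysis'') rather than executing it, and it is the one step of the proposition not already contained in the results you cite; with the observation above inserted, your proof is complete.
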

\begin{proof}
For (ST1), (ST2), (ST3), this is \cite[Proposition~3.2]{FKRS12}
except that the density condition in (ST2) is not stated therein; that statement is a consequence of
the definition of the Mumford--Tate group \cite[Definition~2.11]{FKRS12}.
For (ST4), apply Proposition~\ref{Hodge equals Lefschetz}.
\end{proof}

\section{Classification: an overview}
\label{section: upper bound}

This section is mainly expository. We report on several aspects of the following theorem, whose proof will appear in the upcoming work \cite{FKS}. 

\begin{theorem}[\cite{FKS}]\label{theorem: upper bound}
Up to conjugacy, at most $433$ subgroups of $\GL_6(\C)$ satisfy the Sato--Tate axioms for an abelian threefold. 
%At most 413 of them can occur as the Sato--Tate group of an abelian threefold defined over a number field.
Among these, $30$ are maximal with respect to finite index inclusions.
\end{theorem}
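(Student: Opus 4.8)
The plan is to reduce the count of closed subgroups $G \subseteq \USp(6)$ satisfying the Sato--Tate axioms to a finite enumeration by first stratifying according to the identity component $G^0$, and then, for each fixed $G^0$, bounding the possible component groups $G/G^0$. By Proposition~\ref{Hodge equals Lefschetz} and axiom (ST4), $G^0$ is the connected centralizer of a semisimple $\R$-algebra acting on $\C^6$, namely one of the possible geometric endomorphism algebras of an abelian threefold in characteristic $0$. First I would enumerate these algebras using the Albert classification of division algebras with positive involution, compatible with the constraint that $\dim_\Q A = 3$; combined with the Hodge condition (ST2), which forces a Hodge circle of the prescribed eigenvalue multiplicities, this produces the finite list of $14$ connected groups $G^0$ asserted in the excerpt (products of copies of $\SU(2)$, $\USp(4)$, $\USp(6)$, unitary groups $\Unitary(1)$, $\Unitary(2)$, $\Unitary(3)$, $\SU(3)$, and tori, suitably embedded in $\USp(6)$).

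Next, for each $G^0$, I would determine the finitely many admissible component groups. The key structural input is Lemma~\ref{lemma: Galois action of component group}: the component group $G/G^0$ is identified with a Galois group $\Gal(K/k)$, and its conjugation action on $G^0$ is constrained. Concretely, $G/G^0$ must embed into the group $\Aut(G^0, \ST\text{-data})$ of components of the normalizer $N_{\USp(6)}(G^0)/G^0$ that preserve the relevant structure (the Hodge circles and the endomorphism algebra), and each coset must be consistent with axiom (ST3): for every component $H$ the Haar expectation $\Exp[\chi(\gamma) : \gamma \in H]$ of each irreducible character $\chi$ of $\GL_6(\C)$ must be an integer, and in particular the moment expectations $\Exp[\Trace(\gamma, \wedge^m\C^6)^n : \gamma \in H]$ lie in $\Z$. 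This rationality constraint is what cuts the potentially infinite family of finite subgroups of each normalizer component group down to finitely many candidate overgroups. The claim that exactly $30$ of the resulting groups are maximal with respect to finite-index inclusions then follows by computing, component-by-component, the inclusion poset among the admissible $G$ sharing a common $G^0$ (and across different $G^0$ when one identity component contains another), and extracting its maximal elements.

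The main obstacle is the case in which $G^0$ is a one-dimensional torus. Here the normalizer quotient $N_{\USp(6)}(G^0)/G^0$ contains a copy of $\PU(3)$-type structure, so the admissible component groups correspond to certain finite subgroups of $\PSU(3)$, and the enumeration is genuinely delicate: there is no longer an a priori bound coming from a small ambient finite group, and one must instead invoke the Blichfeldt--Dickson--Miller classification of finite subgroups of $\PSU(3)$ \cite{MBD61} to produce a complete list, then filter it through the rationality axiom (ST3). As indicated in the excerpt, I would package the resulting maximal overgroups uniformly via complex reflection groups, which makes the integrality computations tractable and confirms that this torus case contributes the expected maximal cases rather than an unbounded family. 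Verifying that the $30$ maximal groups are pairwise non-conjugate within $\USp(6)$ (needed for the count to be exact) is a separate check, deferred in the excerpt to \cite{FKS}, that I would carry out by comparing invariants such as component groups, moment sequences, and the isomorphism type of $G^0$.
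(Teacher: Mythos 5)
Your overall architecture --- stratify by the identity component, then classify admissible component groups inside $N/G^0$ using (ST3) --- is the paper's strategy, but there are genuine gaps at exactly the two points where the real work lies. First, the identity-component step: Theorem~\ref{theorem: upper bound} is a purely group-theoretic count of subgroups of $\GL_6(\C)$ satisfying (ST1)--(ST4), so you cannot invoke Proposition~\ref{Hodge equals Lefschetz} or Albert's classification of endomorphism algebras of abelian threefolds to enumerate the possible $G^0$: those results apply to actual abelian varieties, and using them here would only bound Sato--Tate groups of genuine threefolds, not axiom-satisfying groups (that the two lists coincide is a conclusion of the classification, not an input). The paper argues intrinsically: a maximal torus of $G^0$ has rank at most $3$, the Dynkin classification lists the candidate Lie algebras, and the existence of a faithful $6$-dimensional unitary symplectic self-dual representation eliminates $\mathfrak{sp}_4$, $\mathfrak{g}_2$, $\mathfrak{so}_6$, $\mathfrak{so}_7$ and restricts $\mathfrak{sl}_3$; then (ST4) eliminates $\SO(3)$ and $\SU(3)$, and a separate argument (cited to \cite{FKS16}) eliminates $\Unitary(2)$. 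Your proposed list of building blocks includes $\Unitary(2)$ and $\SU(3)$, neither of which occurs among the $14$ identity components --- a symptom of the fact that nothing in your sketch performs these exclusions.

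Second, the type-$\bN$ case $G^0 = \Unitary(1)_3$: citing Blichfeldt--Dickson--Miller and then ``filtering through (ST3)'' is not a finite procedure as stated, because the classification in Lemma~\ref{lemma: BDM} contains infinite families (diagonal abelian subgroups, their extensions by $\alt 3$ and $\sym 3$, and subgroups of $\SU(3) \cap (\Unitary(1) \times \Unitary(2))$). The missing idea is Lemma~\ref{lemma: type N cyclic subgroup order}: a finite cyclic subgroup $H$ of $\SU(3)$ containing $\mu_3$ and satisfying (ST3$'$) has $\#(H/\mu_3)$ dividing $7$, $8$, or $12$, proved by converting the integrality of $|\Trace|^2$ into a search for cyclotomic points on a curve (Conway--Jones \cite{CJ76} or Beukers--Smyth \cite{BS02}); only with this in hand does the enumeration become finite. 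Moreover, the relevant normalizer quotient is $\PSU(3) \rtimes \cyc 2$, not $\PSU(3)$: all $12$ maximal type-$\bN$ groups meet the outer coset, and classifying the double covers of a given $H$ is a nontrivial extra layer (a double cover need not be a semidirect product, and a double cover of $H$ need not extend to an overgroup of $H$, so maximality in $\PSU(3) \rtimes \cyc 2$ cannot be read off from maximality in $\PSU(3)$). A minor further point: maximality with respect to finite-index inclusions only compares groups with the same identity component (a finite-index inclusion of closed subgroups forces equal identity components), so the cross-component comparisons you propose are unnecessary.
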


\begin{remark} \label{remark: upper bound}
In \S\ref{section: realization}, we will show that all 433 groups of the above theorem satisfy the Sato--Tate axioms (see Remark \ref{remark: lowerbound}).
\end{remark}

We will describe the possible connected components of these groups (\S\ref{section: identity component}), explain their connection to the possible geometric endomorphism rings of abelian threefolds (\S\ref{section: endomorphism rings}), and for each identity component present the extensions that are maximal with respect to the relation of inclusion of finite index up to conjugacy (\S\ref{section: extensions}). We should nevertheless emphasize that \S\ref{section: identity component} and \S\ref{section: endomorphism rings} contain complete proofs; only \S\ref{section: extensions} depends on results in \cite{FKS}.

\subsection{Connected component of the identity}\label{section: identity component}

We first identify the connected groups that satisfy the Sato--Tate axioms (ST1), (ST2), (ST4). Note that condition (ST3) is vacuous for a connected group, as the expected value in question is simply the dimension of the fixed subspace of the representation with character~$\chi$.

Let $G^0$ be a connected group satisfying the Sato--Tate axioms and  
let $T$ be a maximal torus of $G^0$. Let $\mathfrak{h}$ denote the Lie algebra of $G^0$ and let $\mathfrak{h}_\C$ be the complexification of $\mathfrak{h}$. Since $T$ is contained in a maximal torus of $\USp(6)$, which is 3-dimensional, $\mathfrak{h}_\C$ has rank at most 3.

The set of Lie algebras of rank at most 3 can easily be determined from the classification of Dynkin diagrams:

$$
\begin{array}{|c|c|}\hline
r & \mbox{Lie algebras of rank }r \\\hline
1 & \mathfrak{t}_1,\ \mathfrak{sl}_2 = \mathfrak{so}_3 \\\hline
2 & \mathfrak{t}_2,\ \mathfrak{t}_1 \ttimes \mathfrak{sl}_2,\ \mathfrak{sl}_2 \ttimes \mathfrak{sl}_2 = \mathfrak{so}_4,\ \mathfrak{sl}_3,\  \mathfrak{sp}_4 = \mathfrak{so}_5,\ \mathfrak{g}_2 \\\hline	
3 & 
\begin{array}{c}
\mathfrak{t}_3, \mathfrak{t}_2 \ttimes \mathfrak{sl}_2,\ 
\mathfrak{t}_1 \ttimes \mathfrak{sl}_2 \ttimes \mathfrak{sl}_2,\ 
\mathfrak{t}_1 \ttimes \mathfrak{sl}_3, \mathfrak{t}_1 \ttimes \mathfrak{sp}_4, \mathfrak{t}_1 \ttimes \mathfrak{g}_2,\ \mathfrak{sl}_2 \ttimes \mathfrak{sl}_2 \ttimes \mathfrak{sl}_2, \\
\mathfrak{sl}_2 \ttimes \mathfrak{sl}_3,\ \mathfrak{sl}_2 \ttimes \mathfrak{sp}_4, \mathfrak{sl}_2 \ttimes \mathfrak{g}_2,\ \mathfrak{sl}_4 = \mathfrak{so}_6,\  \mathfrak{so}_7,\ \mathfrak{sp}_6
\end{array}\\\hline
\end{array}
$$
\smallskip

The standard representation of $\USp(6)$ gives rise to a faithful 6-dimensional unitary symplectic self-dual representation
of $\mathfrak{h}$. This eliminates some Lie algebras from the above list.
\begin{itemize}
\item
There are no 6-dimensional symplectic representations of $\mathfrak{sp}_4$ except if we allow some trivial
summands, but these violate (ST2).
\item
The smallest dimension of a nontrivial
representation of $\mathfrak{g}_2$ is $7 > 6$; this also rules out $\mathfrak{t}_1 \times \mathfrak{g}_2$
and $\mathfrak{sl}_2 \times \mathfrak{g}_2$.
\item
The only nontrivial self-dual representation of $\mathfrak{sl}_3$ of dimension at most~$6$ is the
sum of the standard 3-dimensional representation and its dual.
This rules out $\mathfrak{sl}_2 \times \mathfrak{sl}_3$.
\item
The only nontrivial self-dual representation of $\mathfrak{so}_6$ is the standard representation,
which is not symplectic.
\item
The smallest dimension of a nontrivial representation of $\mathfrak{so}_7$ is $7>6$.
\end{itemize}

The above considerations, together with an additional argument to rule out the group $\Unitary(2)$ (see \cite[Rem. 2.3]{FKS16}), allow us to conclude that $G^0$ must correspond to one of the following groups (the subscripts in the notation are defined below):

$$
\begin{array}{|c|c|}\hline
\dim(T) & \mbox{Connected groups satisfying (ST1) and (ST2)} \\\hline
1 & \Unitary(1)_3,\ \SU(2)_3,\ \SO(3) \\\hline
2 & \Unitary(1) \ttimes \Unitary(1)_2,\ \Unitary(1) \ttimes \SU(2)_2,\ \SU(2) \ttimes \Unitary(1)_2,\ \SU(2) \ttimes \SU(2)_2,\ \SU(3) \\\hline
3 & 
\begin{array}{c}
\Unitary(1) \ttimes \Unitary(1) \ttimes \Unitary(1),\ \Unitary(1) \ttimes \Unitary(1) \ttimes \SU(2),\ \Unitary(1) \ttimes \SU(2) \ttimes \SU(2),\ \Unitary(3), \\
\Unitary(1) \ttimes \USp(4),\ \SU(2) \ttimes \SU(2) \ttimes \SU(2),\ \SU(2) \ttimes \USp(4),\ 
\USp(6)
\end{array}\\\hline
\end{array}
$$
\smallskip

Each group comes equipped with a 6-dimensional representation,
which may be inferred from the preceding table using the following considerations:
\begin{itemize}
\item The groups $\SU(2)$, $\USp(4)$, $\USp(6)$ carry their standard representations. 
\item For $d=2,3$, the notations $\Unitary(1)_d, \SU(2)_d$ refer to the $d$-fold diagonal representations of $\Unitary(1), \SU(2)$.
\item The groups $\Unitary(1)$, $\Unitary(3)$ are embedded into $\SU(2), \USp(6)$ by the formula
\begin{equation}\label{equation: embed}
A \mapsto \begin{pmatrix} A & 0 \\ 0 & \overline{A} 
\end{pmatrix}.
\end{equation}
\item The groups $\SO(3)$ and $\SU(3)$ are viewed in $\USp(6)$ via the above embedding of $\Unitary(3)$.
\item Note that the abstract group $\Unitary(1) \times \SU(2)$ is considered twice: once as a subgroup of
$\Unitary(1) \times \Unitary(1) \times \SU(2)$ via the diagonal embedding of $\Unitary(1)$,
and once as a subgroup of $\Unitary(1) \times \SU(2) \times \SU(2)$ via the diagonal embedding
of $\SU(2)$. 
\end{itemize}

In the case $\USp(6)$ and all of the product cases, one easily verifies that (ST4) is satisfied by following the methods of
\cite[\S 4]{FKRS12}. This leaves the cases of $\SO(3)$, $\SU(3)$, and $\Unitary(3)$, which we work out by hand.
Recall that $\Unitary(3)$ is embedded into $\USp(6)$ via \eqref{equation: embed}.
As a result, we see that the endomorphisms of $\C^6$ commuting with $\SO(3)$ or $\SU(3)$ are those given by block scalar matrices, all of which also commute with $\Unitary(3)$. It follows that $\SO(3)$ and $\SU(3)$ do not satisfy (ST4). 

The possible values for a connected $G$ satisfying the Sato--Tate axioms are listed in the fourth column of Table~\ref{table: connected ST groups}.

\subsection{Geometric endomorphism algebras}
\label{section: endomorphism rings}

In this section we recover the possible geometric $\R$-algebras of endomorphisms of abelian threefolds from our classification of identity components of Sato--Tate groups. We then describe the possible geometric $\Q$-algebras of endomorphisms they correspond, by using Shimura's refinement of Albert's classification of division algebras equipped with a positive involution. 

\subsubsection{Geometric endomorphism $\R$-algebras} For an abelian threefold $A$ defined over a number field $k$, we can recover the geometric endomorphism $\R$-algebra $\End(A_{\overline k})_\R\colonequals\End(A_{\overline k})\otimes \R$ by first finding the subalgebra of $\End(\C^6)$ commuting with $\ST(A)^0$, then picking out the maximal positive definite subspace of this algebra under the Rosati form \cite[Prop.\ 2.19]{FKRS12}.

In the case of $\USp(6)$ and all the other product cases, the above computation is easily carried out with the methods of \cite[\S4]{FKRS12}.
In the case of $\Unitary(3)$ the Rosati form is given up to scalars by
\[
\begin{pmatrix}
uI_3 & 0 \\ 0 & vI_3
\end{pmatrix}
\mapsto 6uv = \frac{3}{2}((u+v)^2 - (u-v)^2),
\]
so the maximal positive definite subspace consists of those matrices for which $u+v \in \R,
u-v \in i\R$, or equivalently $v = \overline{u}$. Therefore $\End(A_{\overline k})_\R\simeq \C$. We assign letters $\bA$, $\bB$, \ldots, $\bN$ to each of the possibilities for $\End(A_{\overline k})_ \R$. We refer to this letter as the \emph{absolute type} of $A$. The absolute types together with the possible values for $\End(A_{\overline k})_\R$ are listed in the first and second columns of Table~\ref{table: connected ST groups}.

\subsubsection{Geometric endomorphism $\Q$-algebras} For a simple abelian variety $A$ of dimension $g \leq 3$, the work of Albert and Shimura leaves the following possibilities for the geometric $\Q$-algebra of endomorphisms $\End(A_{\overline k})_\Q:=\End(A_{\overline k})\otimes \Q$. Note that for a fixed $g\leq 3$, each possibility is distinguished by the $\Q$-dimension $d$ of $\End(A_{\overline k})_\Q$ (equal to the $\R$-dimension of $\End(A_{\overline k})_\R$).
(See for example \cite{Oo87}.)
\medskip

\begin{center}
\begin{tabular}{|l|l|l|l|}\hline
$g$ & $d$ & Albert type & $\Q$-algebra\\\hline
$1$ & 1 & I(1) & $\Q$\\
    & 2 & IV(1,1) & imaginary quadratic field\\\hline
$2$ & 2 &  I(1) & real quadratic field\\
    & 4 & I(2) & indefinite quaternion algebra\\
    & 6 &  IV(2,1) & quartic CM field\\\hline
$3$ & 1 & I(1) & $\Q$\\
    & 2 & IV(1,1) & imaginary quadratic field\\
    & 3 & I(3) & totally real cubic field\\
    & 6 & IV(3,1) & sextic CM field\\\hline
\end{tabular}
\end{center}
\medskip

From the list of geometric $\Q$-algebra endomorphism types for simple abelian varieties of dimension $g\le 3$ one can determine all possibilities for $g=3$ by taking all possible products (including self products) that yield an abelian variety of dimension~$3$; there are 22 ways to do this.  Each yields a distinct type of $\Q$-algebra, but only 14 distinct $\R$-algebras (for example, taking three non-isogenous elliptic curves of type I(1) yields the same $\R$-algebra $\R\times\R\times \R$ as a single RM abelian threefold of type I(3)).
This leads to the following description of absolute types in terms of the the isogeny decomposition of $A_{\overline k}$ (compare with \cite[\S4, Table 2]{Sut16}):
\begin{enumerate}
\item[($\bA$)] $A_{\overline k}$ is simple of type I(1);
\item[($\bB$)] $A_{\overline k}$ is simple of type IV(1,1);
\item[($\bC$)] $A_{\overline k}$ is isogenous to the product of an elliptic curve without CM and a simple abelian surface of type I$(1)$;
\item[($\bD$)] $A_{\overline k}$ is isogenous to the product of an elliptic curve with CM and a simple abelian surface of type I$(1)$;
\item[($\bE$)] $A_{\overline k}$ is either
\begin{itemize}
\item simple of type I(3), or
\item isogenous to the product of an elliptic curve without CM and a simple abelian surface of type I(2), or
\item isogenous to the product of three pairwise non-isogenous elliptic curves without CM;
\end{itemize}
\item[($\bF$)] $A_{\overline k}$ is either
\begin{itemize}
\item isogenous to the product of an elliptic curve with CM and two nonisogenous elliptic curves without CM, or
\item isogenous to the product of an elliptic curve with CM and a simple abelian surface of type I(2);
\end{itemize}
\item[($\bG$)] $A_{\overline k}$ is either
\begin{itemize}
\item isogenous to the product of an elliptic curve without CM and two nonisogenous elliptic curves with CM, or
\item isogenous to the product of an elliptic curve without CM and a simple abelian surface of type IV(2,1);
\end{itemize}
\item[($\bH$)] $A_{\overline k}$ is either
\begin{itemize}
\item simple of type IV(3,1), or
\item isogenous to the product of an elliptic curve with CM and a simple abelian surface of type IV(2,1), or
\item isogenous to the product of three pairwise non-isogenous elliptic curves with CM; 
\end{itemize}
\item[($\bI$)] $A_{\overline k}$ is either
\begin{itemize}
\item isogenous to the product of an elliptic curve without CM and a simple abelian surface of type II(1), or
\item isogenous to the product of an elliptic curve $E$ without CM and the square of an elliptic curve without CM nonisogenous to $E$;
\end{itemize}
\item[($\bJ$)] $A_{\overline k}$ is either
\begin{itemize}
\item isogenous to the product of an elliptic curve with CM and the square of an elliptic curve without CM, or
\item isogenous to the product of an elliptic curve with CM and a simple abelian surface of type II(1);
\end{itemize}
\item[($\bK$)] $A_{\overline k}$ is isogenous to the product of an elliptic curve without CM and the square of an elliptic curve with CM;
\item[($\bL$)] $A_{\overline k}$ is isogenous to the product of an elliptic curve $E$ with CM and the square of an elliptic curve with CM nonisogenous to $E$;
\item[($\bM$)] $A_{\overline k}$ is isogenous to the cube of an elliptic curve without CM;
\item[($\bN$)] $A_{\overline k}$ is isogenous to the cube of an elliptic curve with CM.
\end{enumerate}

\begin{remark}
The third column of Table \ref{table: connected ST groups} summarizes the discussion above.  In the table one can see that absolute types are ordered according to the lexicographic ordering of pairs $(r, d)$ where $r$ is the rank of the N\'eron-Severi group of $A_{\overline{k}}$ and $d$ is the $\R$-dimension of $\End(A_{\overline k})_\R$. See \cite{CFS19} for an interpretation of $r$ and $d$ in terms of the moments of the Sato--Tate group.
\end{remark}

\begin{table}[h]
\small
\[
\begin{array}{|c|c|c|c|}\hline
\mbox{Type} & \mathrm{End}(A_{\overline k})_{\R}& \mbox{Splitting of }A_{\overline k} & \ST(A)^0   \\
\hline
\bA & \R & T & \USp(6) \\\hline
\bB & \C & S_{\IM}& \Unitary(3) \\\hline
\bC & \R \times \R & E\times S & \SU(2) \times \USp(4)  \\\hline
\bD & \C \times \R & E_{\CM}\times S &\Unitary(1) \times \USp(4)  \\\hline
\bE & \R \times \R \times \R & \begin{array}{c} T_{\RM}\\ E\times S_{\RM} \\ E\times E' \times E'' \end{array}  &  \SU(2) \times \SU(2) \times \SU(2) \\\hline
\bF & \C \times \R \times \R & \begin{array}{c} E_{\CM}\times E'\times E''\\ E_{\CM}\times S_{\RM} \end{array} & \Unitary(1) \times \SU(2) \times \SU(2)  \\\hline
\bG & \C \times \C \times \R & \begin{array}{c}E_{\CM}\times E_{\CM}'\times E''\\  S_{\CM}\times E\end{array}& \Unitary(1) \times \Unitary(1) \times \SU(2)  \\\hline
\bH & \C \times \C \times \C & \begin{array}{c} T_{\CM}\\ E_{\CM}\times S_{\CM} \\ E_{\CM}\times E'_{\CM}\times E''_{\CM} \end{array} & \Unitary(1) \times \Unitary(1) \times \Unitary(1)  \\\hline
\bI & \R \times \M_2(\R) & \begin{array}{c} E\times (E')^2\\ E\times S_{\QM}\end{array} & \SU(2) \times \SU(2)_2  \\\hline
\bJ & \C \times \M_2(\R) & \begin{array}{c} E_{\CM}\times (E')^ 2\\ E_{\CM}\times S_{\QM}\end{array} & \Unitary(1) \times \SU(2)_2 \\\hline
\bK &  \R\times \M_2(\C) & \begin{array}{c} E\times (E'_{\CM})^ 2\\ \end{array}& \SU(2) \times \Unitary(1)_2  \\\hline
\bL & \C \times \M_2(\C) & E_{\CM}\times (E'_{\CM})^2 & \Unitary(1) \times \Unitary(1)_2  \\\hline
\bM &  \M_3(\R) & E^ 3  & \SU(2)_3  \\\hline
\bN & \M_3(\C) & E_{\CM}^3 & \Unitary(1)_3 \\
\hline
\end{array}
\]
\vspace{6pt}
\caption{Real endomorphism algebras and connected parts of Sato--Tate groups of abelian threefolds. We describe the decomposition up to isogeny of $A_{\overline k}$; when doing so, we denote by $E$, $S$, $T$ simple abelian varieties over $\overline k$ of respective dimensions 1, 2, 3. Given $E$, we denote  by $E'$ an elliptic curve not isogenous to $E$ (a similar convention applies to $E''$). For $B\in \{E,S,T\}$, we write $B_{\CM}$ to indicate that $B$ has complex multiplication by a CM field of degree $2\dim(B)$ and $B_{\RM}$ to indicate that $B$ has real multiplication by a field of degree $\dim(B)$. The lack of a subindex indicates that $B$ has trivial endomorphism ring. We write $S_{\IM}$ (resp.\ $S_{\QM}$) to indicate that $S$ has multiplication by an imaginary quadratic field (resp.\ by an indefinite quaternion algebra over $\Q$).  
}\label{table: connected ST groups}
\end{table}

\subsection{Extensions}\label{section: extensions}

Let $G^0$ be one of the connected groups obtained in \S\ref{section: identity component}. Let~$N$ and~$Z$ denote the normalizer and centralizer of $G^0$ within $\USp(6)$. The general strategy in \cite{FKS} to prove Theorem \ref{theorem: upper bound} exploits the bijection between:
\begin{itemize}
\item The set of (conjugacy classes of) closed subgroups $G$ of $\USp(6)$ with identity component $G^0$ that satisfy (ST3);
\item The set of (conjugacy classes of) finite subgroups $H$ of $N/G^0$ such that $HG^0$ satisfies (ST3).
\end{itemize}
In this section we report on some aspects of this proof of Theorem \ref{theorem: upper bound}.
We will organize the discussion into four cases depending on the structure of the normalizer~$N$. We say that $G^0$ is \emph{decomposable} if there is a subgroup $G^0_1$ of $\SU(2)$ and a subgroup $G^0_2$ of $\USp(4)$ such that
\begin{equation}\label{equation: decomposable}
G^0= G^ 0_1 \times G^ 0_2\,.
\end{equation}
The indecomposable cases correspond to $G^0=\USp(6),\,\Unitary(3)$. Suppose that $G^0$ is decomposable and let $N_1$ (resp.\ $N_2$) denote the normalizer of $G^0_1$ (resp.\ $G_2^ 0$) in $\SU(2)$ (resp.\ $\USp(4)$). We say that that $G^0$ is a \emph{split product} (resp.\ a \emph{non-split product}) if 
$$
N= N_1\times N_2\,,\qquad \text{(resp. $N \supsetneq N_1\times N_2$)}.
$$
Eight of the possibilities for $G^0$ happen to be split products.
The non-split product cases include the \emph{triple products} $\SU(2)\times \SU(2)\times\SU(2)$ and $\Unitary(1)\times \Unitary(1)\times \Unitary(1)$, and also the \emph{diagonal products} $\SU(2)_3$ and $\Unitary(1)_3$. 

\begin{remark} We will only impose (ST3) explicitly in the case of diagonal products (and, in fact, only a relaxed version of (ST3)). However, it should be noted that (ST3) will be imposed in an implicit way for the split products. We will postpone to \S\ref{section: realization} the verification that (ST3) is satisfied for all the extensions described in this section. In particular, it will follow from the discussion in \S\ref{section: realization} that, when $G^0$ is indecomposable or a triple product, any closed subgroup $G$ of $N$ with identity component $G^0$ automatically satisfies (ST3). 
\end{remark}

The main purpose of this section is to describe the 30 group extensions that potentially satisfy the Sato--Tate axioms and are maximal with respect to finite index inclusions (up to conjugacy). The distribution of these groups among the possibilities for $G^0$ will be discussed below and it is summarized in Table \ref{table: extensions}.

\subsubsection{Indecomposable cases}\label{section: indecomposable cases}

In case $G^0 = \USp(6)$ (type $\bA$), it is clear that $G = G^0$, which is obviously maximal. In case $G^0 = \Unitary(3)$
(type $\bB$), we have
$Z = \Unitary(1)_3 \subset G^0$ and $N = G^0 \cup J G^0$.
Consequently, $N/G^0 \simeq \cyc 2$, so $G$ equals either $G^0$ or $N$, which is the only maximal group for the identity component $G^0=\Unitary(3)$.

\subsubsection{Split products}\label{section: splitproductsclass}
Note that in this case 
$$
G=G_1 \times G_2\,,
$$
where $G_1$ is a subgroup of $N_1$ with identity component $G_1^0$ and and $G_2$ is a subgroup of $N_2$ with identity component $G_2^0$. Here $G^0_i$ is as in \eqref{equation: decomposable}.
We start with the cases where we must have $G_i = G^0_i$ for one $i$, in which case the options come directly from the other factor.

\begin{itemize}
\item
In the case $G^0 = \SU(2) \times \USp(4)$ (type $\bC$), we must have $G_i = G_i^0$ for $i=1,2$, so $G = G^0$. This yields a maximal group.

\item
In the case $G^0 = \Unitary(1) \times \USp(4)$ (type $\bD$), we must have $G_2 = G^0_2$, so $G = G_1 \times \USp(4)$
for $G_1 \in \{ \Unitary(1), N(\Unitary(1))\}$. Then $G_1 = N(\Unitary(1))$ yields a maximal group.

\item
In the case $G^0 = \SU(2) \times \SU(2)_2$ (type $\bI$), we must have $G_1 = G_1^0$, so $G = \SU(2) \times G_2$ for some
$G_2$ among the groups of dimension $d=3$ in \cite[Table~8]{FKRS12}. Among these groups, the maximal ones are $J(E_4)$ and $J(E_6)$. For this identity component we find 10 extensions, among which two are maximal, namely $\SU(2)\times J(E_4)$ and $\SU(2)\times J(E_6)$.

\item
In the case $G^0 = \SU(2) \times \Unitary(1)_2$ (type $\bK$), we must have $G_1 = G^0_1$, so $G = \SU(2) \times G_2$ for some $G_2$
among the groups of dimension $d=1$ in \cite[Table~8]{FKRS12}. 
Among these groups, the maximal ones are $J(D_6)$ and $J(O)$. For this identity component we find 32 extensions, among which two are maximal, namely $\SU(2)\times J(D_6)$ and $\SU(2)\times J(O)$.

\item
In the case $G^0 =  \Unitary(1) \times \Unitary(1)\times \SU(2)$ (type $\bG$), take $G^0_1=\SU(2)$ and $G^0_2=\Unitary(1)\times\Unitary(1)$; we must have $G_1 = G^0_1$,
so $G = \SU(2)\times G_2$ for some $G_2$ among the groups with dimension $d=2$ in \cite[Table~8]{FKRS12}. Among these groups, only $F_{a,b,c}$ is maximal. For this identity component we find~$8$ extensions, of which one is maximal: $\SU(2)\times F_{a,b,c}$ (we will return to this case in \S\ref{section: upperbound}).
\end{itemize}

\begin{remark}
As indicated by the asterisks that appear in \cite[Table~8]{FKRS12}, the subgroup $F_{a,b,c}$ of $\USp(4)$ is one of three subgroups that satisfy the Sato--Tate axioms for abelian surfaces but do not arise as the Sato--Tate group of an abelian surface.  This explains the discrepancy between the counts 55 and 52 in \cite{FKRS12} of groups that satisfy the Sato--Tate axioms and groups that arise as Sato--Tate groups, and it increases the number of maximal Sato--Tate groups with identity component $\Unitary(1)\times \Unitary(1)$ (two of the maximal subgroups of $F_{a,b,c}$ arise as Sato--Tate groups).  A similar phenomenon occurs here, as explained in Remark~\ref{remark: maximalgroups} below.
\end{remark}

We next consider cases where $G^0_1 = \Unitary(1)$. In such cases, $G_1 \in \{\Unitary(1), N(\Unitary(1))\}$,
so for any given $G$ there exists a group $G_2$ with connected part $G^0_2$ such that either
$G = \Unitary(1) \times G_2$, $G = N(\Unitary(1)) \times G_2$, or $G$ is the fiber product
$N(\Unitary(1)) \times_{\cyc 2} G_2$ with respect to the unique nontrivial homomorphism $N(\Unitary(1)) \to \cyc 2$
and some nontrivial homomorphism $G_2 \to \cyc 2$.
The latter may be characterized by the group $G_2$ and the index-2 subgroup $K$ which is the kernel of the homomorphism.

\begin{itemize}

\item
In the case $G^0 = \Unitary(1) \times \SU(2) \times \SU(2)$  (type $\bF$),
take $G^0_1=\Unitary(1)$ and $G_2^0=\SU(2)\times\SU(2)$; then
$G_2 \in \{\SU(2) \times \SU(2), N(\SU(2) \times \SU(2))\}$
and there is a unique fiber product in the latter case. For this identity component we find 5 extensions, of which one is maximal: $N(\Unitary(1))\times N(\SU(2)\times \SU(2))$.

\item
In the case $G^0 = \Unitary(1) \times \SU(2)_2$ (type $\bJ$), 
$G_2$ must be taken from the groups with $d=3$ in \cite[Table~8]{FKRS12}.
Unique fiber products occur for $G_2 = E_2, E_4, E_6, J(E_1), J(E_3)$.
Multiple fiber products occur for $G_2 = J(E_{2n})$ for $n=1,2,3$,
 as we may take the kernel to be either $E_{2n}$ or one of the two copies of $J(E_n)$. (The latter are conjugate to each other via the normalizer of $J(E_{2n})$, and so give rise to conjugate fiber products.) For this identity component we find 31 extensions, of which two are maximal: $N(\Unitary(1))\times J(E_4)$ and $N(\Unitary(1))\times J(E_6)$.

\item
In the case $G^0 = \Unitary(1) \times \Unitary(1)_2$ (type $\bL$), we find the maximal extensions $N(\Unitary(1))\times J(D_6)$ and $N(\Unitary(1))\times J(O)$. We defer to \cite{FKS} the discussion yielding 122 extensions in this case.
\end{itemize}

\subsubsection{Triple products}\label{section: tripleprodclass}

We next classify the options for $G$ assuming that $G^0 \simeq \SU(2) \times \SU(2) \times \SU(2)$ (type $\bE$).
In this case, $N/G^0 \simeq \sym 3$; more precisely, $N$ is generated by $G^0$ plus the
permutation matrices. We find four options for $G$ in this case, corresponding to the subgroups of $\sym 3$ up to conjugacy,
which we may identify with $\sym 1$, $\sym 2$, $\alt 3$, $\sym 3$. We obtain a unique maximal extension in this case.

We next classify the options for $G$ assuming that $G^0 \simeq \Unitary(1) \times \Unitary(1) \times \Unitary(1)$
(type $\bH$).
In this case, $Z = G^0$ and $N/G^0$ is isomorphic to the wreath product $\cyc 2 \wr \sym 3$. We find 33 options for $G$ in this case, all contained in a unique maximal extension (we will return to this case in \S\ref{section: upperbound}). 

\subsubsection{The diagonal product $\SU(2)_3$}
We next classify the options for~$G$ assuming that $G^0 \simeq \SU(2)_3$ (type $\bM$).
In this case, $Z$ equals the group $\mathrm{O}(3)$ realized as a block matrix with scalar entries,
$Z \cap G^0 = \{\pm 1\}$, 
and $N = ZG^0$, so $N/G^0 \simeq \SO(3)$. We first bound the order of a cyclic subgroup of $\SO(3)$ satisfying (ST3). 
Any cyclic subgroup of $\SO(3)$ of order $n$ is conjugate to
\[
\langle A \rangle, \qquad
A = \begin{pmatrix} 1 & 0 & 0 \\
0 & \cos 2\pi/n & \sin 2\pi/n \\
0 & -\sin 2\pi/n & \cos 2\pi/n
\end{pmatrix}.
\]
By (ST3) the average of the trace of the representation $\wedge^2\C^6$ on the coset of $G^0$ containing $A$, which is given by
$$
\Exp[\Trace(\gamma, \wedge^2 \C^{6}): \gamma \in AG^0]=|\Trace(A)|^ 2=(1 + 2\cos 2 \pi/n)^2,
$$
must be an integer;
this happens if and only if $n \in \{1,2,3,4,6\}$.
Since every finite subgroup of $\SO(3)$ is either cyclic, dihedral, or one of the three exceptional groups
(tetrahedral, octahedral, icosahedral), we obtain component
groups isomorphic to 
\[
\cyc 1,\, \cyc 2,\, \cyc 3,\, \cyc 4,\, \cyc 6,\, \dih 2,\, \dih 3,\, \dih 4,\, \dih 6,\, \alt 4,\, \sym 4.
\]
For this identity component we find 11 extensions, among which two are maximal, namely those with component groups $\dih 6$ and $\sym 4$, which we denote by $\SU(2)_3[\dih 6]$ and $\SU(2)_3[\sym 4]$.

\subsubsection{The diagonal product $\Unitary(1)_3$}
We finally report on the classification for the options for~$G$ achieved in \cite{FKS} assuming that $G^0 \simeq \Unitary(1)_3$ (type $\bN$).
In this case, $Z = \Unitary(3)$ embedded as in \S\ref{section: identity component}, $G^0 \subset Z$, and $N = Z \cup JZ$,
so $N/G^0 \simeq \PSU(3) \rtimes \cyc{2}$ for the action of $\cyc{2}$ on $\PSU(3)$ via complex conjugation.

As in the case $G^0 \simeq \SU(2)_3$, if the cyclic subgroup of $\PSU(3)$ generated by $A \in Z$ satisfies (ST3), then
$\left| \Trace(A) \right|^2$ is an integer (and similarly for every power of $A$). It will suffice to apply (ST3) in this form, which we denote by (ST3$'$), as we will show \emph{a posteriori} in 
\S\ref{section: realization} that every group we obtain also satisfies (ST3) in full.

We first focus on subgroups of $\PSU(3)$. Since $\SU(3)$ surjects onto $\PSU(3)$ with kernel $\mu_3$, we may identify finite subgroups of $\PSU(3)$ with finite subgroups of $\SU(3)$ containing $\mu_3$;
for the latter, we use the classification by Blichfeldt--Dickson--Miller \cite[Chapter~XII]{MBD61}.
\begin{lemma} \label{lemma: BDM}
Every finite subgroup of $\SU(3)$ containing $\mu_3$ is conjugate to a subgroup which is either
\begin{itemize}
\item
contained in the diagonal torus,
\item
contained in $\SU(3) \cap (\Unitary(1) \times \Unitary(2))$,
\item
a semidirect product of $\alt 3$ acting on a diagonal abelian group,
\item
a (not necessarily split) extension of $\sym 3$ by a diagonal abelian group, or
\item
one of seven exceptional cases whose images in $\PSU(3)$ have orders 
$36$, $72$, $216$ (the Hessian group), $60$ (the group $\alt 5$),
$360$ (the group $\alt 6$), or $168$ (the group $\PSL(2, 7)$).
\end{itemize}
\end{lemma}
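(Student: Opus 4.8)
The plan is to deduce the lemma directly from the classification of finite subgroups of $\SL(3,\C)$ (equivalently, of $\PGL(3,\C)$) of Blichfeldt--Dickson--Miller \cite[Chapter~XII]{MBD61}, by translating their list into the five stated cases. First I would reduce from $\SU(3)$ to $\SL(3,\C)$: by Weyl's unitary trick every finite subgroup of $\SL(3,\C)$ preserves a positive-definite Hermitian form and is thus $\GL_3(\C)$-conjugate into $\SU(3)$, and for finite subgroups of $\SU(3)$, $\GL_3(\C)$-conjugacy coincides with $\SU(3)$-conjugacy. Since $\mu_3 = Z(\SL(3,\C)) = Z(\SU(3))$, the condition $\mu_3 \subseteq G$ is intrinsic and preserved under conjugation. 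Hence it suffices to classify, up to conjugacy, the finite subgroups of $\SL(3,\C)$ containing the center, or equivalently the finite subgroups of $\PGL(3,\C)$.

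Next I would invoke the trichotomy underlying the classification: acting on $\mathbb{P}^2$, a finite subgroup of $\PGL(3,\C)$ is either \emph{intransitive} (equivalently, $G$ preserves a line in $\C^3$), \emph{imprimitive} (transitive, but preserving an unordered triple of lines spanning $\C^3$), or \emph{primitive}. In the intransitive case, after passing to a unitary frame $G$ preserves both the invariant line and its orthogonal complement, so it is block-diagonal of type $1\oplus 2$; this yields the case $G \subseteq \SU(3)\cap(\Unitary(1)\times\Unitary(2))$, the simultaneously diagonalizable subgroups accounting for the diagonal-torus case. In the imprimitive case, a unitary frame adapted to the three lines realizes $G$ as a group of monomial matrices; the permutation action on the three lines is a transitive subgroup of $\sym 3$, hence $\alt 3$ or $\sym 3$, with kernel a finite diagonal abelian group $D$. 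A $3$-cycle is realized by a genuine permutation matrix, which already lies in $\SL(3,\C)$, so the $\alt 3$ case splits as $D \rtimes \alt 3$; a transposition has permutation-determinant $-1$ and must be corrected by a diagonal factor, which can force its lift to have order exceeding $2$, so the $\sym 3$ case is only a (possibly non-split) extension of $\sym 3$ by $D$. This matches the third and fourth bullets.

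The substantive case is the primitive one, where I would read off from \cite[Chapter~XII]{MBD61} the complete list of primitive finite subgroups of $\PGL(3,\C)$: the icosahedral group $\alt 5$ (order $60$), the Klein group $\PSL(2,7)$ (order $168$), the Valentiner group $\alt 6$ (order $360$), and the Hessian group (order $216$) together with its distinguished subgroups of orders $36$ and $72$. For each I would exhibit the finite subgroup of $\SL(3,\C)$ lying above it that contains $\mu_3$: the Hessian-type groups already contain the center, whereas $\alt 5$ and $\PSL(2,7)$ require the central extension by $\mu_3$ (a direct product, as their Schur multipliers are prime to $3$), and $\alt 6$ requires the Valentiner triple cover $3\cdot\alt 6$. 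Recording the orders of the projective images then produces the list $36,72,216,60,360,168$.

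The main obstacle is the bookkeeping in this primitive case: extracting the groups from the nonstandard normalizations of \cite{MBD61}, and, above all, counting conjugacy classes correctly. Several of these groups admit a pair of Galois-conjugate $3$-dimensional representations (over $\Q(\sqrt 5)$ for $\alt 5$ and $\alt 6$, over $\Q(\sqrt{-7})$ for $\PSL(2,7)$), and for each one must decide whether the two embeddings are $\PSU(3)$-conjugate or give distinct classes. It is precisely this analysis that determines the exact number of exceptional cases, namely seven, which exceeds the six distinct projective orders because at least one projective group contributes two $\PSU(3)$-conjugacy classes. By contrast, once the appropriate unitary frame is fixed, the intransitive and imprimitive cases reduce to routine linear algebra.
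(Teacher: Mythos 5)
Your overall route coincides with the paper's: the paper offers no argument for this lemma at all, presenting it as a restatement of the Blichfeldt--Dickson--Miller classification, and your reduction (unitary trick to pass between $\SU(3)$ and $\SL(3,\C)$, then the intransitive/imprimitive/primitive trichotomy, then matching the first four bullets to the intransitive and imprimitive cases) is exactly the intended translation of \cite[Chapter~XII]{MBD61}. Two small points in the monomial case: the splitting over $\alt 3$ is better justified by noting that any preimage $g = dT$ of a $3$-cycle in $\SL(3,\C)$ satisfies $g^3 = \det(g)\,I = I$ (or by a Hilbert~90 argument conjugating $g$ to the permutation matrix $T$), rather than by asserting that a genuine permutation matrix already lies in the group; the conclusion is correct either way. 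Your observation about which exceptional groups need the extra central $\mu_3$ (direct product for $\alt 5$ and $\PSL(2,7)$ since their Schur multipliers are prime to $3$, the nonsplit triple cover for $\alt 6$) is also correct.

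The genuine gap is the final counting step, which is the only quantitative assertion of the lemma. You claim the total of seven exceptional cases arises because ``at least one projective group contributes two $\PSU(3)$-conjugacy classes,'' attributing this to pairs of Galois-conjugate three-dimensional representations, but you neither identify which group does so nor verify it --- and for every group you name the proposed mechanism demonstrably fails. The two representations of $\PSL(2,7)$ over $\Q(\sqrt{-7})$ are interchanged by its outer automorphism, so their images are conjugate in $\GL_3(\C)$ (this is the classical fact underlying the rationality of the Klein quartic); the two icosahedral representations of $\alt 5$ are real and are swapped by the outer automorphism of $\alt 5$, again giving a single class; and for the Valentiner cover $3\cdot\alt 6$ one checks that the outer automorphism group of $\alt 6$ accounts for all four Galois-conjugate three-dimensional characters (the $\mathrm{M}_{10}$-type automorphism lifts so as to centralize the center, the other two invert it), so once more a single conjugacy class results. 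Thus Galois-conjugate pairs produce no extra classes among the groups you list, and your mechanism cannot be what makes the count come out to seven; the enumeration must instead be taken from, and reconciled with, the explicit case list of \cite[Chapter~XII]{MBD61}. As written, the step from the six projective isomorphism types to ``exactly seven exceptional cases'' is asserted rather than proved, and the heuristic offered in its place is incorrect.
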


To reduce the classification to a finite problem, we study the abelian case in detail.
In what follows, let $D(u,v,w)$ denote the $3 \times 3$ diagonal matrix with diagonal entries $e^{2 \pi i u}, e^{2 \pi i v}, e^{2 \pi i w}$.

\begin{lemma} \label{lemma: type N cyclic subgroup order}
Let $H$ be a finite cyclic subgroup of $\SU(3)$ containing $\mu_3$ and satisfying (ST3$'$).
Then $\#(H/\mu_3)$ divides one of $7, 8, 12$. 
\end{lemma}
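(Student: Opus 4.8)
The plan is to translate (ST3$'$) into a Galois-stability condition on the six adjoint weights of $\bar A\in\PSU(3)$, and then to bound $n=\#(H/\mu_3)=\ord(\bar A)$ by analyzing how $(\Z/n)^\times$ can act on a triple of weights summing to zero. First I would set up the following \emph{reduction}. Write $H=\langle A\rangle$; since $\mu_3\subseteq H$ and $H$ is cyclic we have $|H|=3n$ and $A^n$ is a primitive scalar cube root of unity, so the eigenvalues of $A$ may be written $\zeta_{3n}^{a_1},\zeta_{3n}^{a_2},\zeta_{3n}^{a_3}$ with $a_i\equiv 1\pmod 3$ and $\sum a_i\equiv 0$. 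The six nonzero weights of $A$ acting on $\End(\C^3)$ are the ratios $\zeta_n^{\pm u},\zeta_n^{\pm v},\zeta_n^{\pm w}$, where $u,v,w\in\Z/n$ are the reductions of $(a_i-a_j)/3$ and satisfy $u+v+w\equiv 0$, and faithfulness of the adjoint on $\PSU(3)$ gives $\lcm(\ord u,\ord v,\ord w)=n$. Since $|\Trace(A^k)|^2-3$ is exactly the character at $\bar A^k$ of the $6$-dimensional representation $\rho$ with weight multiset $M=\{\pm u,\pm v,\pm w\}$, axiom (ST3$'$) says $\rho$ is a rational representation of $\langle\bar A\rangle\cong\cyc n$, i.e.\ $M$ is stable under multiplication by every $t\in(\Z/n)^\times$.

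The \emph{main step} treats the nondegenerate case where the six weights are distinct. Here I would first check that the only triples of elements of $M$ summing to $0$ are $\{u,v,w\}$ and $\{-u,-v,-w\}$: any triple containing a pair $\pm x$ sums to the remaining weight (nonzero), while the remaining "one from each pair" triples $\{u,v,-w\},\{u,-v,w\},\{-u,v,w\}$ and their negatives evaluate to $\pm 2u,\pm 2v,\pm 2w$, all nonzero in the nondegenerate case. For $t\in(\Z/n)^\times$ the triple $\{tu,tv,tw\}$ sums to $0$ and lies in $M$, so it equals $\{u,v,w\}$ or $\{-u,-v,-w\}$; recording the sign and the induced permutation yields a homomorphism $(\Z/n)^\times\to\{\pm1\}\times\sym3$, injective because $tu=u$ and $tv=v$ force $t\equiv1$. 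Hence $(\Z/n)^\times$ is abelian of order at most $6$ and exponent dividing $6$; together with $\phi(n)\le 6$ this already excludes $n=5,10$, where $(\Z/n)^\times\cong\cyc4$.

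The candidates with $\phi(n)\le 6$ that remain are $n\in\{1,2,3,4,6,7,8,9,12,14,18\}$, and the crux — which I expect to be the main obstacle — is eliminating $9,14,18$, since the abstract embedding of the previous step does not see the relation $u+v+w\equiv 0$. For each such $n$ we have $(\Z/n)^\times\cong\cyc6$ with generator $t_0$, which under the injective homomorphism must map to the unique order-$6$ element $(-1,\sigma)$ with $\sigma$ a $3$-cycle; choosing labels this gives $v\equiv-t_0u$ and $w\equiv t_0^2u$, whence $u+v+w\equiv(1-t_0+t_0^2)u\equiv0$. A direct computation shows $1-t_0+t_0^2\not\equiv0\pmod n$ for $n=9,14,18$ (in contrast with $n=7$, where it vanishes, which is exactly why $7$ survives), so $u$, and hence $v$ and $w$, lie in a proper subgroup of $\Z/n$, contradicting $\lcm(\ord u,\ord v,\ord w)=n$.

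Finally I would dispatch the \emph{degenerate cases}, where $A$ has a repeated eigenvalue: this is exactly when one of $u,v,w$ vanishes or two of them coincide, and in either situation some weight $\pm x$ occurs in $M$ with multiplicity $\ge 2$ while the paired weights do not. Stability of $M$ under $(\Z/n)^\times$ then forces $tx\in\{x,-x\}$ for all $t$, so $(\Z/n)^\times\hookrightarrow\{\pm1\}$ and $n\in\{1,2,3,4,6\}$. Combining all cases yields $n\in\{1,2,3,4,6,7,8,12\}$, each of which divides one of $7,8,12$, as claimed.
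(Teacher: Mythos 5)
Your strategy is sound and genuinely different from the paper's proof: the paper reduces (ST3$'$) to the integrality of a sum of six roots of unity and then invokes the Conway--Jones / Beukers--Smyth theory of vanishing sums of roots of unity (cyclotomic points on a curve), deferring details to the sequel, whereas you give a self-contained Galois-stability argument on the weight multiset $M=\{\pm u,\pm v,\pm w\}$. Your translation of (ST3$'$) into stability of $M$ under $(\Z/n)^\times$ is correct, and your nondegenerate analysis — the embedding $(\Z/n)^\times\hookrightarrow\{\pm 1\}\times\sym 3$, the exclusion of $n=5,10$, and the elimination of $n=9,14,18$ via $1-t_0+t_0^2\not\equiv 0\pmod n$ (with $u,v,w$ then generating a proper subgroup) — checks out.

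However, your case analysis is not exhaustive, and the omitted case is populated by actual examples. The complement of ``the six weights are distinct'' is \emph{not} ``one of $u,v,w$ vanishes or two of them coincide'': since $u=-v$ is equivalent to $w=0$ (and similarly for the other pairs), the one remaining way distinctness can fail is that some weight is $2$-torsion, i.e.\ $n$ is even and, say, $u=n/2$, with $u,v,w$ pairwise distinct and nonzero. This case occurs: for $n=8$ take $(u,v,w)=(4,1,3)$, so $M=\{4,4,1,7,3,5\}$, which is stable under $(\Z/8)^\times$; similarly $(6,1,5)$ works for $n=12$. Neither of your two arguments applies to it: the nondegenerate argument needs six distinct weights to define the map to $\{\pm1\}\times\sym 3$, while the degenerate argument's conclusion $(\Z/n)^\times\hookrightarrow\{\pm1\}$, hence $n\in\{1,2,3,4,6\}$, is simply false here (note $\phi(8)=4$); taken literally, your proof would ``rule out'' cyclic groups of order $8$ and $12$ that do occur in the classification. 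The gap is fixable but needs a real argument: with $u=n/2$ fixed by all units, the set $S=\{v,-v,w,-w\}$ is a union of $(\Z/n)^\times$-orbits, and each orbit is the full set of generators of the cyclic subgroup it generates, so it has $\phi(\ord v)$ resp.\ $\phi(\ord w)$ elements. Either both orbits have size $2$, giving $(\Z/n)^\times\hookrightarrow\{\pm1\}^2$ and hence $n\in\{2,4,6,8,12\}$, or $S$ is a single orbit of size $4$, forcing $\ord v=\ord w=d$ with $\phi(d)=4$ and $n=\lcm(2,d)$; the candidate $n=10$ is then killed by parity, since $v+w\equiv 5\pmod{10}$ is odd while any two elements of a single orbit in $\Z/10$ have equal parity. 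You should add this third case explicitly.
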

\begin{proof}
By conjugating, we may ensure that $H$ is generated by some matrix of the form $D(u,v,w)$ with $u,v,w \in \Q$
and $u+v+w \in \Z$,
and the problem is to bound the least common denominator of $u,v,w$ subject to the condition that
\[
e^{2\pi i (u-v)} + 
e^{2\pi i (u-w)} + 
e^{2\pi i (v-u)} + 
e^{2\pi i (v-w)} + 
e^{2\pi i (w-u)} + 
e^{2\pi i (w-v)}  \in \Z.
\]
This amounts to finding the cyclotomic points on a certain algebraic curve
(i.e., solving a multiplicative Manin-Mumford problem), which can be done by 
following either Conway--Jones \cite{CJ76} or Beukers--Smyth \cite{BS02}; we defer to \cite{FKS} for further details.
\end{proof}

\begin{remark} \label{remark: type N order divisible by 7}
By Lemma~\ref{lemma: type N cyclic subgroup order}, the only primes that can divide the order of a finite subgroup of $\PSU(3)$ satisfying (ST3$'$) are $2,3,7$; in particular, the exceptional subgroups $\alt 5, \alt 6$ are excluded. We may moreover classify all of the cases in which~$7$ divides the order as follows.

Up to conjugation, there is a unique copy $H_0$ of $\cyc 7$ in $\PSU(3)$ satisfying (ST3$'$); it is generated by the image of $D(\tfrac{1}{7}, \tfrac{2}{7}, \tfrac{4}{7})$. 
By Lemma~\ref{lemma: type N cyclic subgroup order}, $H_0$ is not contained in any larger abelian group satisfying (ST3$'$); it is also not contained in any conjugate of $\SU(2)$. 
By Lemma~\ref{lemma: BDM} plus a direct verification that $\PSL(2,7)$ satisfies (ST3$'$), it follows (after checking that these cases do all work) that up to conjugacy, the finite subgroups of $\PSU(3)$ satisfying (ST3$'$) of order divisible by 7 are
\[
H_0,\ H_0 \rtimes \alt 3,\ \PSL(2, 7).
\]
Having addressed the subgroups of $\PSU(3)$ of order divisible by $7$, we henceforth restrict our attention to finite subgroups of $\PSU(3)$ with 3-smooth order.
\end{remark}

\begin{remark} \label{remark: type N abelian cases}
Let $H$ be a finite subgroup of $\SU(3)$ of 3-smooth order containing $\mu_3$ and satisfying (ST3$'$).
If $H$ is a diagonal abelian group, then it must be isomorphic to $\cyc{m} \times \cyc{n}$ for some integers $m,n$ with $\lcm(3,m)$ dividing $n$. It is straightforward to exhaustively consider all choices of $m,n$ consistent with Lemma~\ref{lemma: type N cyclic subgroup order} to identify cases satisfying (ST3$'$); we find that in all cases,
$m \in \{1,2,3,4,6\}$.

If $H$ is an extension of $\alt 3$ by a diagonal abelian group $H_0$, then $H_0$ cannot be cyclic (because no cyclic group of order dividing $8$ or $12$ has a faithful automorphism of order $3$). It must thus have the form
$\cyc{m} \times \cyc{n}$ with $m \in \{2,3,4,6\}$. It is straightforward to identify the cases from the previous
paragraph which are stable under $\alt 3$; each of these cases is in fact stable under $\sym 3$.
\end{remark}

\begin{remark}
Using Lemma~\ref{lemma: type N cyclic subgroup order}, Remark~\ref{remark: type N order divisible by 7}, and
Remark~\ref{remark: type N abelian cases},
the classification of finite subgroups of $\SU(3)$ containing $\mu_3$ and satisfying (ST3$'$) 
reduces to the case of a nonabelian subgroup $H$ of $\SU(3) \cap (\Unitary(1) \times \Unitary(2))$.
The latter group admits $\Unitary(1) \times \SU(2)$ as a double cover via the map $(u, A) \mapsto (u^2, u^{-1} A)$.
For $\tilde{H}$ the preimage of $H$ in this cover, the projection of $\tilde{H}$ to $\SU(2)$ cannot be an abelian group of odd order; it must therefore be the inverse image of a dihedral, tetrahedral, or octahedral subgroup of $\SO(3)$. 
Meanwhile, the projection of $\tilde{H}$ to $\Unitary(1)$ has order dividing 16 or 24.
One may thus obtain a bound on such groups by testing (ST3$'$) for each finite subgroup (up to conjugacy) of each of the subgroups
\[
\cyc{96} \times 2\dih{48},\
\cyc{96} \times 2O.
\]
We were not able to completely automate this calculation; the classification in \cite{FKS} is human-readable, although we did cross-check several steps by numerical computation.
\end{remark}

It remains to classify finite subgroups of $\PSU(3) \rtimes \cyc{2}$ not contained in $\PSU(3)$
whose intersection with $\PSU(3)$ equals a fixed group $H$ known to satisfy (ST3$'$); we call each such subgroup a 
\emph{double cover} of $H$.
(In general, not all double covers are semidirect products of $H$ by $\cyc{2}$, but the maximal groups all are.)
In most cases, we work with a conjugacy representative of $H$ 
which is stable under the action $g \mapsto \overline{g}$ of complex conjugation;
under this condition, the double covers of $H$ are the groups of the form $\langle H, Jg \rangle$ for some $g \in N_H$ satisfying $\overline{g} g \in H$. 
The last condition means that the image of $g$ in $N_H/H$ maps to its inverse under complex conjugation; the
values in $N_H/H$ corresponding to the same double cover as $g$ are the images of $\overline{h}^{-1} g h$ for all $h \in N_H$.

\begin{remark}
One complicating feature is that not every double cover of a given $H$ extends to every overgroup. In particular,
the intersection of a maximal finite subgroup of $\PSU(3) \rtimes \cyc{2}$ satisfying (ST3$'$) with $\PSU(3)$ need not be a \emph{maximal} finite subgroup of $\PSU(3)$ satisfying (ST3$'$).
\end{remark}

\begin{theorem} \label{theorem: type N maximal subgroups}
Up to conjugacy, there are $12$ maximal subgroups of $\PSU(3) \rtimes \cyc{2}$ satisfying (ST3$'$),
as listed in Table~\ref{table: maximal subgroups U1}. (See below for explanation of the table entries.)
\end{theorem}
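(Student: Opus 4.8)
The plan is to follow the two-step scheme set up in the preceding discussion: first enumerate the finite subgroups $H$ of $\PSU(3)$ satisfying (ST3$'$), then determine for each such $H$ its double covers inside $\PSU(3) \rtimes \cyc 2$, test (ST3$'$) on the new coset, and finally sift the combined list for the groups maximal under finite-index inclusion. Throughout, the admissible $H$ are recorded together with their normalizers $N_H$ and, wherever possible, with a conjugacy representative stable under the complex-conjugation action $g \mapsto \overline g$ (the residual cases are handled directly).

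First I would assemble the complete list of finite subgroups $H \subset \PSU(3)$ satisfying (ST3$'$). The cases of order divisible by $7$ are pinned down by Remark~\ref{remark: type N order divisible by 7} to be exactly $H_0 \cong \cyc 7$, $H_0 \rtimes \alt 3$, and $\PSL(2,7)$. For $3$-smooth order, Lemma~\ref{lemma: BDM} reduces us to the diagonal abelian groups, their extensions by $\alt 3$ and $\sym 3$, and the nonabelian subgroups of $\SU(3) \cap (\Unitary(1) \times \Unitary(2))$; the abelian and $\alt 3$-extension cases are settled by Remark~\ref{remark: type N abelian cases} (forcing $m \in \{1,2,3,4,6\}$), and the remaining nonabelian case is disposed of by the finite (ST3$'$) test inside $\cyc{96} \times 2\dih{48}$ and $\cyc{96} \times 2O$, using the order bounds of Lemma~\ref{lemma: type N cyclic subgroup order} to keep the search finite. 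This yields an explicit finite list of conjugacy classes of admissible $H$.

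Next I would enumerate the double covers. For each conjugation-stable representative $H$, the candidate double covers are precisely the groups $\langle H, Jg \rangle$ with $g \in N_H$ and $\overline g g \in H$, and two choices of $g$ yield the same double cover exactly when they differ by $\overline h^{-1} g h$ for some $h \in N_H$. Thus the group-theoretic enumeration reduces to describing the image of complex conjugation on $N_H/H$ and the associated twisted conjugacy classes, a finite computation since $N_H/H$ is finite. For each resulting candidate I would then verify (ST3$'$) on the nontrivial coset $J g H$ (computing $|\Trace(\cdot)|^2$ and its analogues for powers), retaining only those double covers that pass. Together with the groups already contained in $\PSU(3)$, this produces the full list of finite subgroups of $\PSU(3) \rtimes \cyc 2$ satisfying (ST3$'$).

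The main obstacle is the extraction of the maximal groups, precisely because maximality is \emph{not} inherited along the two-step construction. As flagged before the statement, the intersection with $\PSU(3)$ of a maximal (ST3$'$)-subgroup of $\PSU(3) \rtimes \cyc 2$ need not itself be maximal in $\PSU(3)$: a double cover of a non-maximal $H$ may fail to extend to a double cover of an overgroup of $H$, since not every double cover lifts. Consequently I cannot simply read off maximality from the $\PSU(3)$-level list; instead I must track the inclusions among all retained double covers (and the groups inside $\PSU(3)$) directly, checking for each candidate whether it sits inside a strictly larger retained group. Carrying out this bookkeeping over the list from the previous steps — and confirming that exactly $12$ conjugacy classes survive, matching Table~\ref{table: maximal subgroups U1} — is the delicate part; the rest rests on the (substantial but routine) verification underlying Lemma~\ref{lemma: BDM} and the finite (ST3$'$) tests.
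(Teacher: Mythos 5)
Your proposal takes essentially the same approach as the paper: enumerate the finite subgroups $H$ of $\PSU(3)$ satisfying (ST3$'$) via Lemma~\ref{lemma: BDM}, Lemma~\ref{lemma: type N cyclic subgroup order} and the accompanying remarks, enumerate double covers $\langle H, Jg \rangle$ via twisted conjugacy classes in $N_H/H$, and sift for maximal groups directly because maximality is not inherited by the intersection with $\PSU(3)$ --- exactly the scheme the paper outlines, with the exhaustive case-by-case verification likewise deferred (the paper defers it to \cite{FKS}). One harmless redundancy: since (ST3$'$) constrains only elements of $\PSU(3)$ and the inner part of any double cover $\langle H, Jg \rangle$ is exactly $H$, your proposed (ST3$'$) test on the outer coset filters nothing.
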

The maximal subgroups in Theorem~\ref{theorem: type N maximal subgroups} are described in  Table~\ref{table: maximal subgroups U1}. In each row, $M$ is an imaginary quadratic field of class number $1$,
$\tilde{H}$ is a subgroup of $\GL_3(M)$, $H$ is the projective image of $\tilde{H}$,
and $H \rtimes \cyc 2$ is a particular choice of semidirect product (to be discussed in more detail in
Lemma~\ref{lemma: complex reflection}). The notation for $\tilde{H}$ is to be interpreted as follows.
\begin{itemize}
\item
The labels $G(m,n,p)$ and $G_i$ refer to the Shephard-Todd notation \cite{ST54} for complex reflection groups.
In these cases, $M$ is the field of traces of $\tilde{H}$ as per \cite[Table~1]{MM10}.
\item
The notation $[G_{8}:4]$ refers to a non-normal index-4 subgroup of $G(1,1,1) \times G_8$. It is not a complex reflection group.
\item
The notation $G'_4$ refers to a copy of $G_4$ embedded via its two-dimensional irreducible representation with rational traces (a cubic twist of the reflection representation). That representation has a nontrivial Schur index: it can only be realized over fields that split the quaternion algebra $(-1,-1)_{\Q}$ (compare \cite[Proposition~3.5]{FG18}). By convention, we do not regard $G'_4$ as a complex reflection group.
\end{itemize}

The action of $\cyc 2$ on $H$ is induced by an action of $\cyc 2$ on $\tilde{H}$ specified as follows.
\begin{itemize}
\item
For $c \in \cyc 2$ and $c_M \in \Gal(M/\Q)$ generators and $\rho: \tilde{H} \to \GL_3(M)$ the implied representation, we have $\rho \cong c_M \circ \rho \circ c$.
\item
In the cases where $\tilde{H}$ is written as a product, the action of $c$ respects this decomposition.
\item
The actions on $G(n, 1, 1), G(4,4,3), G(6,2,3), G_4, G_8, G_{12}, G_{24}, G_{25}$ are all outer, represented by the action of $c_M$ on a $c_M$-stable presentation \cite[Theorem~1.7]{MM10}. 
\item
The action on $[G_{8}:4]$ is outer, but this leaves two possibilities for the semidirect product. One of these extends to $G_8$; we take the other one.
\item
The action on $G(3,3,2)$ is trivial.
\item
The actions on $G(2,1,2), G(6,6,2)$ are outer.
\item
The action on $G_4'$ is outer, but this leaves two possibilities for the semidirect product. One of these recovers $\langle 144, 125 \rangle$;
we take the other one.
\end{itemize}

\begin{table}[ht]
\small
\begin{tabular}{|c|c|c|c|c|}\hline 
$H$ & $H \rtimes \cyc 2$ & $\tilde{H}$ & $\#\tilde{H}/\# H$ & $M$ \\
\hline
$\langle 24, 1 \rangle$ & $\langle 48, 15 \rangle$& $G(1,1,1) \times [G_{8}:4]$ & $1$ & $\Q(i)$  \\
$\langle 24,10 \rangle$ & $\langle 48, 15 \rangle$ & $G(3,1,1) \times G(2,1,2)$ & $1$ & $\Q(\zeta_3)$ \\
$\langle 24,5 \rangle$ & $\langle 48, 38 \rangle$ & $G(4,1,1) \times G(3,3,2)$ & $1$ & $\Q(i)$ \\
$\langle 24,5 \rangle$ & $\langle 48, 41 \rangle$ &  $G(4,1,1) \times G(6,6,2)$ & $2$ & $\Q(i)$ \\
$\langle 48, 29 \rangle$  & $\langle 96, 193 \rangle$ &  $G(1,1,1) \times G_{12}$ & $1$ & $\Q(\sqrt{-2})$\\
$\langle 72, 25 \rangle$ & $\langle 144, 125 \rangle$ &  $G(3,1,1) \times G_{4}$ & $1$ & $\Q(\zeta_3)$ \\
$\langle 72, 25 \rangle$  & $\langle 144, 127 \rangle$ &  $G(3,1,1) \times G_4'$ & $1$ & $\Q(\zeta_3)$ \\
$\langle 96, 67 \rangle$   & $\langle 192, 988 \rangle$ &  $G(1,1,1) \times G_{8}$ & $1$ & $\Q(i)$ \\
$\langle 96, 64 \rangle$  &  $\langle 192, 956 \rangle$ & $G(4,4,3)$ & $1$ &$\Q(i)$\\
$\langle 168, 42 \rangle$  &  $\langle 336, 208 \rangle$ & $G_{24}$& $2$ & $\Q(\sqrt{-7})$\\
$\langle 216, 92 \rangle$  & $\langle 432, 523 \rangle$ &  $G(6,2,3)$ & $3$ & $\Q(\zeta_3)$\\
$\langle 216, 153 \rangle$  & $\langle 432, 734 \rangle$ &  $G_{25}$ & $3$ & $\Q(\zeta_3)$\\\hline
\end{tabular}
\bigskip

\caption{Maximal extensions of $\Unitary(1)_3$.
}
\label{table: maximal subgroups U1}
\end{table}

\begin{remark}
Since we are only interested in $H$ rather than $\tilde{H}$,
some variation in the choice of complex reflection groups is possible in Table~\ref{table: maximal subgroups U1}.
For the most part, we have made choices to minimize $\#\tilde{H}/\#H$.

%\begin{itemize}
%\item
%One can replace $G(1,1,1) \times [G_{8}:4]$ with 
%$G(2,1,1) \times [G_{8}:4]$ ($\#\tilde{H}/\#H = 2$) or $G(4,1,1) \times [G_{8}:4]$ ($\#\tilde{H}/\#H = 4$).
%\item
%One can replace $G(3,1,1) \times G(2,1,2)$ with $G(6,1,1) \times G(2,1,2)$ ($\#\tilde{H}/\#H = 2$).
%\item
%One can replace $G(4,1,1) \times G(1,1,3)$ with $G(4,1,1) \times G(3,3,2)$ ($\#\tilde{H}/\#H = 1$)
%or $G(4,1,1) \times G(6,6,2)$ ($\#\tilde{H}/\#H = 2$). 
%However, this is not true in reverse: the outer action of $\cyc 2$ on $G(6,6,2)$ does not preserve $G(1,1,3)$.
%\item
%One can replace $G(1,1,1) \times G_{12}$ with $G(2,1,1) \times G_{12}$ ($\#\tilde{H}/\#H = 2$).
%\item
%One can replace $G(3,1,1) \times G_4$ with $G(6,1,1) \times G_4$ ($\#\tilde{H}/\#H = 2$),
%$G(1,1,1) \times G_5$ ($\#\tilde{H}/\#H = 1$),
%$G(2,1,1) \times G_5$ ($\#\tilde{H}/\#H = 2$),
%$G(3,1,1) \times G_5$ ($\#\tilde{H}/\#H = 3$),
%or
%$G(6,1,1) \times G_5$ ($\#\tilde{H}/\#H = 6$),
%\item
%One can replace $G(1,1,1) \times G_8$ with $G(2,1,1) \times G_8$ ($\#\tilde{H}/\#H = 2$)
%or $G(4,1,1) \times G_8$ ($\#\tilde{H}/\#H = 4$).
%\item
%One can replace $G(4,1,3)$ with $G(4,2,3)$ ($\#\tilde{H}/\#H = 2$) or $G(4,4,3)$ ($\#\tilde{H}/\#H = 1$).
%\item
%One can replace $G(6,1,3)$ with $G(6,2,3)$ ($\#\tilde{H}/\#H = 3$).
%\item
%One can replace $G_{25}$ with $G_{26}$ ($\#\tilde{H}/\#H = 6$).
%\end{itemize}
\end{remark}

\begin{remark}
By choosing explicit presentations of the groups in Table~\ref{table: maximal subgroups U1}, it is possible to verify
by computer calculations that the subgroups of these groups account for 171 distinct extensions of $\Unitary(1)_3$,
and that there are no containment relations among the groups in the table. The difficult part of the classification, which we defer to \cite{FKS}, is to check that every possible extension is accounted for in this manner; in other words, that
there are no maximal subgroups missing from the table below.
\end{remark}

\begin{table}[h]
\small
\[
\begin{array}{|c|c|c|c|}\hline
\ST(A)^0  &\dim \ST(A)^0 & \mbox{Extensions} & \mbox{Maximal}\\
\hline
\USp(6)& 21 & 1 & 1\\\hline
\Unitary(3) &9 & 2 & 1\\\hline
\SU(2) \times \USp(4) &13 & 1 & 1\\\hline
\Unitary(1) \times \USp(4) &11 & 2 & 1\\\hline
\SU(2) \times \SU(2) \times \SU(2) &9 & 4 & 1\\\hline
\Unitary(1) \times \SU(2) \times \SU(2) &7 & 5 & 1\\\hline
\Unitary(1) \times \Unitary(1)\times \SU(2) &5 & 8 & 1\\\hline
\Unitary(1) \times \Unitary(1) \times \Unitary(1) &3 & 33 & 1\\\hline
\SU(2) \times \SU(2)_2 & 6 & 10 & 2\\\hline
\Unitary(1) \times \SU(2)_2 & 4 &31 & 2\\\hline
\SU(2) \times \Unitary(1)_2 & 4 & 32 & 2\\\hline
\Unitary(1) \times \Unitary(1)_2 & 2 &122 & 2\\\hline
\SU(2)_3 & 3 & 11 & 2\\\hline
\Unitary(1)_3 & 1 & 171 & 12\\
\hline

\end{array}
\]
\vspace{6pt}
\caption{For each connected Sato--Tate group, we list the number of extensions satisfying the Sato--Tate axioms and the number of those which are maximal.
}\label{table: extensions}
\end{table} 

\section{Realization}\label{section: realization}

The goal of this section is to provide a proof of the following result, assuming Theorem \ref{theorem: upper bound}.

\begin{theorem} \label{theorem: lower bound}
Up to conjugacy, there are $410$ closed subgroups of $\USp(6)$ that occur as Sato--Tate groups of abelian threefolds over number fields. 
Among these, the $33$ that are maximal with respect to finite index inclusions arise as Sato--Tate groups of abelian threefolds.
(Hence by base extension, all $410$ groups occur for abelian threefolds over number fields.)
\end{theorem}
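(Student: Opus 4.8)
The plan is to prove Theorem~\ref{theorem: lower bound} in two logically separate pieces: first establish the numerical claim that exactly $410$ of the $433$ axiom-satisfying groups actually occur as Sato--Tate groups (equivalently, that exactly $23$ must be excluded), and then realize the $33$ resulting maximal groups by explicit constructions. For the exclusion count, I would take as a black box the enumeration of $433$ subgroups from Theorem~\ref{theorem: upper bound} and isolate the groups whose identity component contains a two- or three-dimensional torus factor; by Table~\ref{table: connected ST groups} these are precisely the types involving $\Unitary(1)\times\Unitary(1)$ or $\Unitary(1)_2$ or $\Unitary(1)_3$ and their relatives, where the obstruction lives. The key point is that the Sato--Tate axioms are only \emph{necessary} conditions, and a genuine abelian threefold carries extra arithmetic structure: the component group must be realizable as the Galois group of an endomorphism field acting compatibly on a CM type. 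So the substantive work is to show, via Shimura's theory of CM types (in the manner of \cite[\S4.3]{FKRS12}), that $23$ of the candidate component-group actions cannot arise from any actual CM type, whence those groups are not Sato--Tate groups even though they satisfy (ST1)--(ST4).

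For the realization half, I would use the product and twisting constructions set up in \S\ref{section: Sato--Tate groups}. The strategy is to reduce each maximal group to a product of lower-dimensional pieces whenever possible: split products such as $\SU(2)\times J(E_4)$ or $N(\Unitary(1))\times J(O)$ are realized by taking a product of an abelian variety realizing the $\USp(4)$ (or $\Unitary(1)$) factor — drawn from the genus-$2$ classification of \cite{FKRS12} — with an elliptic curve realizing the $\SU(2)$ or $N(\Unitary(1))$ factor, after arranging that the endomorphism fields are linearly disjoint so that the component groups multiply correctly (via Lemma~\ref{lemma: Galois action of component group}). The genuinely $3$-dimensional maximal groups, namely the triple products, the diagonal products $\SU(2)_3[\dih 6]$, $\SU(2)_3[\sym 4]$, $\Unitary(1)_3$, and the $12$ entries of Table~\ref{table: maximal subgroups U1}, require the cocycle twisting of Examples~\ref{example: twistcube} and~\ref{example: twistcube CM}: one chooses a rational Artin representation $\rho$ of the appropriate projective image, applies the twist to $E^3$ (for $\SU(2)_3$ and the non-CM cases) or to a CM $E^3$ (for $\Unitary(1)_3$ and Table~\ref{table: maximal subgroups U1}), and reads off that the endomorphism field has Galois group equal to the projective image of $\rho$, which by construction equals the desired component group.

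The hard part will be the $\Unitary(1)_3$ case and its Table~\ref{table: maximal subgroups U1} entries, for two reasons. First, realizing a prescribed finite subgroup of $\PSU(3)\rtimes\cyc 2$ as a projective Galois image requires producing a rational representation $\tilde\rho$ whose image lands in the right complex reflection group over the right imaginary quadratic field $M$; this is where the integrality condition \eqref{eq: integral compatibility} of Remark~\ref{remark: twisting and principal polarizations} becomes delicate, and where the exceptional constructions of Remark~\ref{remark: twisting with denominator} (over $\Q(\sqrt 3)$, using isogenous products rather than $E^3$ itself) are forced upon us in the handful of cases where \eqref{eq: integral compatibility} fails. Second, one must verify that each realized group satisfies (ST3) in full, not merely the relaxed (ST3$'$) used in \S\ref{section: extensions}; but this is automatic once the group is genuinely a Sato--Tate group by Proposition~\ref{necessity of Sato--Tate}, which closes the loop and justifies Remark~\ref{remark: upper bound}. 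I would organize the exposition by absolute type, handling the split products quickly by citing \cite{FKRS12}, treating the triple and $\SU(2)_3$ diagonal products with the non-CM twist of Example~\ref{example: twistcube}, and devoting the bulk of the section (subsections \S\ref{section: diagonalprodreal} and \S\ref{section: diagonalprodimag}) to the CM twists, with explicit base fields $\Q$ or $\Q(\sqrt 3)$ exhibited for each maximal group and base extension invoked at the end to cover all $410$.
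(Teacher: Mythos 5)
Your overall architecture matches the paper's: a refined CM-type analysis excluding exactly $23$ of the $433$ groups, realization of the $33$ resulting maximal groups over small fields, and base extension via Lemma~\ref{lemma: Galois action of component group} to get all $410$. The split-product and diagonal-product ($\bM$, $\bN$) realizations you describe are essentially the paper's, including the role of \eqref{eq: integral compatibility} and Remark~\ref{remark: twisting with denominator}. However, there is one genuine error and one misidentification worth flagging.

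The error: you propose to realize the \emph{triple products} (the maximal groups of types $\bE$ and $\bH$) by the cocycle twists of Examples~\ref{example: twistcube} and~\ref{example: twistcube CM} applied to $E^3$. This cannot work. A twist of $E^3$ by a cocycle valued in $\Aut(E^3_L)$ is by construction geometrically isomorphic to $E^3$, so its geometric endomorphism algebra is $\M_3(\R)$ or $\M_3(\C)$ and its absolute type is $\bM$ or $\bN$ (identity component the \emph{diagonal} $\SU(2)_3$ or $\Unitary(1)_3$), never the triple-product types $\bE$ or $\bH$, which require three pairwise non-isogenous geometric factors or a simple threefold. The paper uses genuinely different constructions here: for type $\bE$, the Weil restriction of scalars of a non-$\Q$-curve elliptic curve from a non-Galois cubic field (whose three conjugates are pairwise non-isogenous, giving component group $\sym 3$); for type $\bH$, products of CM elliptic curves with pairwise distinct CM fields, a product of a CM elliptic curve with a simple CM abelian surface, and the Jacobian of $y^2 = x^7+1$ (a simple threefold with CM by the cyclic sextic field $\Q(\zeta_7)$, realizing the component group $\langle abc, s\rangle \cong \cyc 6$). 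Without these, three of the maximal groups are simply not realized by your plan.

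The misidentification: you locate the $23$ exclusions in ``types involving $\Unitary(1)\times\Unitary(1)$ or $\Unitary(1)_2$ or $\Unitary(1)_3$.'' In fact the diagonal embeddings $\Unitary(1)_2$ and $\Unitary(1)_3$ are one-dimensional tori, and no exclusions occur for types $\bK$, $\bL$, $\bN$ (indeed all $171$ extensions of $\Unitary(1)_3$ occur). The exclusions live only in types $\bG$ and $\bH$: $3$ of the $8$ extensions of $\Unitary(1)\times\Unitary(1)\times\SU(2)$ (imported from the genus-$2$ exclusions of \cite[\S 4.3--4.4]{FKRS12}) and $20$ of the $33$ extensions of $\Unitary(1)\times\Unitary(1)\times\Unitary(1)$, the latter requiring the Shimura--Dodson analysis of Galois groups of sextic CM fields for the simple case and the genus-$2$ results for the split case. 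If you ran your CM analysis over the diagonal types as well you would eventually find no obstruction there, so this is recoverable, but as stated it predicts exclusions in the wrong places. Finally, a smaller point: for the Table~\ref{table: maximal subgroups U1} cases your plan needs a method to solve the Galois embedding problems for $(\tilde{H}\rtimes\cyc 2, \tilde{H})$ over $M/\Q$; the paper does this uniformly via the Chevalley--Shephard--Todd theorem (Lemma~\ref{lemma: complex reflection}), with ad hoc constructions for the two non-reflection groups $[G_8:4]$ and $G_4'$, and your proposal only gestures at this step.
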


The proof of the theorem has two parts. On the one hand, when the identity component contains
$\Unitary(1) \times \Unitary(1)$ as a factor, we refine the group-theoretic argument by making a further analysis of CM types that rules out 23 of the groups satisfying the Sato--Tate axioms (\S\ref{section: upperbound}). In particular, the set of maximal groups among those satisfying the refined analysis of CM types differs from the set of maximal groups satisfying the Sato--Tate axioms (see Remark \ref{remark: maximalgroups}). 

On the other hand, we realize the remaining 410 groups as Sato--Tate groups of certain abelian threefolds (\S\ref{section: lowerbound}). Crucially, it will suffice to realize the 33 maximal groups among these, as then their subgroups of finite index can be achieved by considering base extensions (by Lemma~\ref{lemma: Galois action of component group}). As an aside, we note that at present we do not know exactly which groups can occur over $\Q$ (certainly not all of them: no connected group with a unique factor isomorphic to $\Unitary(1)$ can arise over~$\Q$, since this would require a CM elliptic curve to have all its endomorphisms defined over~$\Q$).

We note in passing that all of the examples of types other than $\bN$ will carry a principal polarization. 
The analysis of polarizations in the type $\bN$ case is more subtle, and is not included here.

\subsection{Upper bound}\label{section: upperbound}
Let $A$ be an abelian threefold with $\ST(A)=G$ and suppose that $G^0$ contains $\Unitary(1)\times\Unitary(1)$ as a factor.

Suppose first that $G^0= \SU(2)\times \Unitary(1)\times\Unitary(1)$. Then $A$ must split up to isogeny as the product of an elliptic curve without CM and an abelian surface
whose Sato--Tate group has identity component $\Unitary(1) \times \Unitary(1)$. By \cite[\S4.3, \S4.4]{FKRS12}, of the 8 options
for $G$ described in \S\ref{section: splitproductsclass}, we may omit the 3 for which $G_2$ cannot occur for abelian surfaces.

Suppose next that $G^0= \Unitary(1)\times \Unitary(1) \times \Unitary(1)$ so that $N/G^0$ is isomorphic to the wreath product $\cyc 2 \wr \sym 3$. Let $a,b,c$ be representatives of the nontrivial cosets in the three factors of $N(\Unitary(1))$,
and set $t,s$ correspond to the permutations $(12), (123)$ in~$\sym 3$.

If $A$ is not simple, then it splits as a product of a CM elliptic curve and a CM abelian surface;
this means that up to conjugacy, $\ST(A)$ is contained in the product of a subgroup of 
$\langle a,b,t \rangle$ and a subgroup of $\langle c \rangle$. 
By the analysis from \cite[\S 4.3]{FKRS12}, the former must be a subgroup of either $\langle a,b \rangle$
or $\langle at \rangle$.

To handle the case where $A$ is simple, 
we recall some basic facts about abelian varieties with complex multiplication,
following \cite[\S 1]{Dod84} and \cite[\S 1]{Kil16} (and ultimately \cite[\S 5.5]{Shi71}).
For future reference, we temporarily allow $A$ to have arbitrary dimension.

\begin{definition}
A \emph{CM field} is a totally imaginary quadratic extension of a totally real number field.
A \emph{CM type} is a pair $(M, \Phi)$ consisting of a CM field $M$ and a section $\Phi$ of the map grouping the complex embeddings of $M$ into conjugate pairs.

Given a CM type $(M, \Phi)$, for any $\Z$-lattice $\mathfrak{m}$ in $M$, we may use $\Phi$ to specify a complex structure on $M \otimes_\Q \R$, and thus view $(M \otimes_\Q \R)/\mathfrak{m}$ as a complex torus. Changing the choice of $\mathfrak{m}$ gives rise to an isogenous torus \cite[Proposition~5.13]{Shi71}; consequently, we may associate a polarization to this torus using the trace pairing on $M$, and thus obtain an abelian variety $A$ over $\C$ with complex multiplication by some order in 
$M$. Note that if $M$ has degree $2n$, then $A$ has dimension $n$.
\end{definition}

\begin{definition}
Given a CM type $(M, \Phi)$, let $L$ be the Galois closure of $M$ in $\C$, which is again a CM field.
The action of $\Gal(L/\Q)$ on the complex embeddings of $M$ by postcomposition acts on the CM types for $M$.
The \emph{reflex field} of $(M, \Phi)$ is the fixed field $M^*$ of the stabilizer of $\Phi$ in $\Gal(L/\Q)$; note that $M^*$ is defined as a subfield of $L$ (and hence of $\C$), 
whereas $M$ does not come with a distinguished embedding into $L$.
(There is also a CM type associated to $M^*$, the \emph{reflex type}, which we will not discuss here.)
\end{definition}

\begin{lemma}
Fix an embedding of $k$ into $\C$. 
Let $A$ be a simple polarized abelian variety of dimension $n$ over $k$ with CM type $(M, \Phi)$.
\begin{enumerate}
\item[(a)]
The field $M^*$ is Galois over $k \cap M^*$.
\item[(b)]
The field $kM^*$ is the endomorphism field of $A$.
\item[(c)]
The group $\Gal(kM^*/k)$ is canonically isomorphic to a subgroup of $\Aut(M/\Q)$.
\end{enumerate}
\end{lemma}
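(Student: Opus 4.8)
The plan is to deduce all three parts from a single compatibility between the Galois action on $A$ and the CM type, keeping scrupulous track of two commuting actions on the set of embeddings $\Hom(M,\C)$. Fix the algebraic closure $\Qbar\supseteq k$ inside $\C$, write $\Gamma=\Gal(\Qbar/\Q)$ and $G_k=\Gal(\Qbar/k)$. Since $A$ is simple of dimension $n$ with CM by the degree-$2n$ field $M$, we have $\End(A_{\overline k})_\Q=M$, and for $\sigma\in G_k$ the conjugation $\alpha\mapsto\sigma\alpha\sigma^{-1}$ on $\End(A_{\overline k})_\Q$ defines $\tau_\sigma\in\Aut(M/\Q)$; one checks $\sigma\mapsto\tau_\sigma$ is a homomorphism with kernel exactly $G_K$, so $K$ is its fixed field. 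On $\Hom(M,\C)$ there is a left action of $\Gamma$ by postcomposition, with respect to which $M^*=\Qbar^{H^*}$ for $H^*=\operatorname{Stab}_\Gamma(\Phi)=\{\gamma:\gamma\Phi=\Phi\}$, and a commuting right action of $\Aut(M/\Q)$ by precomposition. Distinguishing these two actions is the point on which everything turns: the reflex field is cut out by the \emph{left} stabilizer, while simplicity will be expressed through the \emph{right} stabilizer.

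The main step is the identity
\[
\sigma\Phi \;=\; \Phi\circ\tau_\sigma \qquad (\sigma\in G_k). \qquad (\star)
\]
To prove it I would recall from Shimura's theory (cf.\ \cite{Shi71, Dod84}) that for any $\sigma\in\Gamma$ the conjugate $A^\sigma$, with its conjugate CM structure, has CM type $\sigma\Phi$: this is read off from the $M$-action on the tangent space, which $\sigma$ transports to $A^\sigma$, scaling each eigenvalue $\phi(x)$ to $\sigma(\phi(x))$. When $\sigma\in G_k$ we have $A^\sigma=A$, but the CM structure is now twisted by $\tau_\sigma$, so the same variety is presented with CM type $\Phi\circ\tau_\sigma$; comparing the two descriptions gives $(\star)$. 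I expect this to be the only place requiring genuine care, both because it needs the precise normalization of how conjugation acts on tangent spaces and CM types, and because the left/right bookkeeping must be exactly right.

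Granting $(\star)$, part (a) is the assertion that $G_k$ normalizes $H^*$, since $M^*/(k\cap M^*)$ is Galois precisely when $H^*\triangleleft\langle G_k,H^*\rangle$, and $H^*$ already normalizes itself. For $g\in G_k$ and $h\in H^*$ one has $ghg^{-1}\in H^*$ iff $h$ fixes $g^{-1}\Phi$; but $(\star)$ applied to $g^{-1}$ gives $g^{-1}\Phi=\Phi\circ\tau_{g^{-1}}$, and since the two actions commute,
\[
\{\gamma:\gamma(\Phi\circ\tau)=\Phi\circ\tau\}=\{\gamma:\gamma\Phi=\Phi\}=H^*
\]
for any $\tau\in\Aut(M/\Q)$. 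Hence every $g\in G_k$ normalizes $H^*$, proving (a).

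For part (b) I would compare the two subgroups of $G_k$ directly: unwinding definitions gives $G_{kM^*}=\{\sigma\in G_k:\sigma\Phi=\Phi\}$ and $G_K=\{\sigma\in G_k:\tau_\sigma=\mathrm{id}\}$. The inclusion $G_K\subseteq G_{kM^*}$ is immediate from $(\star)$. For the reverse, $\sigma\Phi=\Phi$ together with $(\star)$ yields $\Phi\circ\tau_\sigma=\Phi$; here simplicity enters decisively, since a primitive CM type has trivial stabilizer under precomposition by $\Aut(M/\Q)$, forcing $\tau_\sigma=\mathrm{id}$ (this primitivity $\Leftrightarrow$ simplicity input, cf.\ \cite{Shi71}, is the essential algebraic fact, and is exactly where the hypothesis that $A$ is simple is used). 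Thus $G_{kM^*}=G_K$ and $K=kM^*$. Part (c) is then formal: the homomorphism $\sigma\mapsto\tau_\sigma$ has kernel $G_K=G_{kM^*}$, so it descends to an injection $\Gal(kM^*/k)\hookrightarrow\Aut(M/\Q)$, canonically identifying $\Gal(kM^*/k)$ with its image.
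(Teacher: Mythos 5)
Your proof is correct, but it takes a genuinely different route from the paper. The paper disposes of (a) and (b) by citation --- it invokes Shimura directly (\cite[Theorem~5.15, Proposition~5.17]{Shi71}) --- and then obtains (c) exactly as you do, from the conjugation action of $\Gal(kM^*/k)$ on $\End(A_\C)_\Q = M$. You instead rederive the Shimura statements from two more primitive inputs: the compatibility $\sigma\Phi = \Phi\circ\tau_\sigma$ for $\sigma \in G_k$ (obtained by comparing the two presentations of $(A^\sigma,\iota^\sigma)$), and the fact that simplicity of $A$ is equivalent to primitivity of $\Phi$, which forces the right stabilizer of $\Phi$ in $\Aut(M/\Q)$ to be trivial. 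Your bookkeeping checks out: the left/right actions on $\Hom(M,\C)$ do commute, the stabilizer identity $\operatorname{Stab}_\Gamma(\Phi\circ\tau) = \operatorname{Stab}_\Gamma(\Phi)$ gives (a), and the two inclusions $G_K \subseteq G_{kM^*}$ and $G_{kM^*}\subseteq G_K$ give (b), with simplicity entering exactly where you say it does. What your approach buys is transparency: it isolates precisely where the hypothesis that $A$ is simple is used (via primitivity, in the reverse inclusion of (b)), which the paper's citation hides; what the paper's approach buys is brevity and reliance on a standard reference. Two small points you gloss over, both harmless: the identification $\End(A_{\overline k})_\Q = M$ (equality, not just containment) itself uses simplicity together with the degree bound $[D:\Q] \mid 2n$ for the endomorphism division algebra; and the assertion ``primitive $\Rightarrow$ trivial stabilizer under precomposition'' deserves its one-line proof (lift $\tau$ to the Galois closure $L$; it normalizes $\Gal(L/M)$ and right-stabilizes the lifted type, and primitivity says that right stabilizer is exactly $\Gal(L/M)$), though citing Shimura for it, as you do, is acceptable.
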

\begin{proof}
For (a), (b), see \cite[Theorem~5.15, Proposition~5.17]{Shi71}. 
For (c), the action from (b) gives an action of $\Gal(kM^*/k)$ on $\End(A_\C)_{\Q} = M$.
\end{proof}

\begin{remark}
To trace the effect of the previous discussion on Sato--Tate groups, we identify $\ST(A)/\ST(A)^0$
with a subgroup of $\cyc 2 \wr \sym n$ using the embedding of $M$ into $\End(A_K)_{\R}$
to fix the identification of $\Unitary(1) \times \cdots \times \Unitary(1)$ with $\prod_{\tau \in \Phi} \C^\times$.
Then the subgroup $\ST(A)/\ST(A)^0$ is contained in $\Gal(L/(k \cap M^*))$
and the composition
\[
\ST(A)/\ST(A)^0 \to \Gal(L/(k \cap M^*)) \to \Gal(M^*/(k \cap M^*)) \cong \Gal(kM^*/k)
\]
is the canonical map (see Remark~\ref{remark: general ST group}).
\end{remark}

We now specialize back to the case where $A$ is 
a simple CM abelian threefold. In this case, as per \cite[\S 5.1.1]{Dod84},
the sextic field $M$ must be one of the following.
\begin{itemize}
\item
A cyclic Galois extension of $\Q$. In this case,
we must have $G/G^0 \subseteq \langle abc,s \rangle$ as this is the unique copy of $\cyc 6$ in $\cyc 2 \wr \sym 3$.

\item
A compositum of an imaginary quadratic field $M_0$ and a non-Galois totally real cubic field $M_1$.
In this case, $\Aut(M/\Q) = \Gal(M_0/\Q)$ and so $G/G^0 \subseteq \langle abc \rangle$.

\item
A non-Galois extension of $\Q$ whose Galois closure has group $\cyc 2 \wr \alt 3$ or $\cyc 2 \wr \sym 3$.
In this case, $\Aut(M/\Q)$ is trivial, as then is $G/G^0$.

\end{itemize}

To summarize, we list the 33 conjugacy classes of subgroups of $N/G^0$ (as may be verified using \Gap\ or \Magma) in Table~\ref{table:subgroups SU2xSU2xSU2}. In the table, $\star$ indicates a normal subgroup, while an underline denotes a group that is permitted by the previous analysis to occur as a Sato--Tate group of an abelian threefold; we will see later that these groups all do occur (see \S\ref{section: tripleprodreal}).

\begin{table}
\small
\begin{tabular}{|c|c|c|c|}\hline
Order & Isom type &Groups & Total \\
\hline
1 & $\cyc 1$ & $\underline{\langle e \rangle}$ & 1 \\
2 & $\cyc 2$ & $\underline{\langle a \rangle}$, $\underline{\langle ab \rangle}$, $\underline{\langle abc \rangle}\star$, $\langle t \rangle$, $\langle ct \rangle$& 5 \\
3 & $\cyc 3$ & $\underline{\langle s \rangle}$ & 1 \\
4 & $\cyc 4$ & $\underline{\langle at \rangle}$, $\underline{\langle act \rangle}$ &2 \\
4 & $\cycsup{2}{2}$ &  $\underline{\langle a,b \rangle}$, $\underline{\langle a,bc \rangle}$, $\underline{\langle ab, bc \rangle}\star$, $\langle c,t\rangle$,
$\langle ab,t \rangle$, $\langle ab, ct \rangle$, $\langle abc, t \rangle$ &7 \\
6 & $\cyc 6$  & $\underline{\langle abc,s \rangle}$&1 \\
6 & $\sym 3 $  & $\langle s,t \rangle$, $\langle abct, s \rangle$ &2 \\
8 & $\cyc{2} \times \cyc{4}$  & $\underline{\langle c,at \rangle}$ &1 \\
8 & $\cycsup{2}{3}$  & $\underline{\langle a,b,c \rangle}\star$,$\langle ab,c,t \rangle$&2 \\
8 & $\dih 4$  & $\langle a,b,t \rangle$, $\langle ab,bc,t \rangle$,
$\langle a,b,ct \rangle$, $\langle ab,bc,ct \rangle$ &4\\
12 & $\dih 6$ & $\langle abc,s,t \rangle$ &1\\
12 & $\alt 4$  & $\langle ab,bc,s \rangle\star$ &1\\
16 & $\cyc{2} \times \dih 4$ & $\langle a,b,c,t \rangle$&1\\
24 & $\cyc 2 \wr \alt 3$ &$\langle a,b,c,s \rangle\star$&1\\
24 & $\sym 4$ & $\langle ab,bc,s,t \rangle\star$, 
$\langle ab,bc,at,s \rangle\star$&2\\
48 & $\cyc 2 \wr \sym 3$ & $\langle a,b,c,s,t \rangle\star$ &1\\ \hline
\end{tabular}
\vspace{6pt}
\caption{Conjugacy classes of subgroups of $N/G^0$ for $G^0 \simeq \Unitary(1) \times \Unitary(1) \times \Unitary(1)$. The starred groups are normal; the underlined groups can be realized by Sato--Tate groups.}
\label{table:subgroups SU2xSU2xSU2}
\end{table}

\begin{remark}\label{remark: maximalgroups} The above discussion eliminates, in particular, the groups $\SU(2)\times F_{a,b,c}$ and $N(\Unitary
(1)\times \Unitary(1)\times \Unitary(1))$. These are the maximal groups among those satisfying the Sato--Tate axioms for the absolute types $\bG$ and $\bH$ as seen in \S\ref{section: splitproductsclass} and \S\ref{section: tripleprodclass}, respectively. In their place, five groups become maximal among the list of groups satisfying the above refined analysis. The groups $\SU(2)\times F_{a,b}$ and $\SU(2)\times F_{ac}$ become maximal for the absolute type $\bG$, and the groups with groups of connected components $\langle a,b,c \rangle$, $\langle at,c \rangle$, and  $\langle abc,s \rangle$ become maximal for the absolute type $\bH$. The refined analysis thus yields 33 maximal groups in total. 
\end{remark}

\begin{remark}\label{remark: lowerbound}
Recall from Remark~\ref{remark: upper bound} that Theorem~\ref{theorem: upper bound} gives only an upper bound on the set of conjugacy classes of subgroups of $\GL_6(\C)$ satisfying the Sato--Tate axioms for an abelian threefold.
The lower bound is not obtained in this argument because we only verify (ST3) for some choices of the character $\chi$.

However, it can be shown that all 433 conjugacy classes that occur do in fact satisfy (ST3) in full. For the 410 groups that occur in Theorem~\ref{theorem: lower bound}, the realizations of these groups as Sato--Tate groups of abelian threefolds will \emph{a posteriori} confirm (ST3). 
To settle the remaining cases, it suffices to check (ST3) for the full normalizers of
$\Unitary(1) \times \Unitary(1) \times \SU(2)$
and $\Unitary(1) \times \Unitary(1) \times \Unitary(1)$ in $\USp(6)$.
For $G$ equal to one of these groups, we can write $G$ as $G_0 \rtimes K$ where $K$ is a finite subgroup of $\USp(6) \cap \GL_6(\Q)$; this means that (ST3) holds for every $\chi$ which is the base extension of a $\Q$-linear representation of $\GL_6(\Q)$, and this covers all $\chi$ by virtue of the usual description of irreducible representations of $\GL_6(\C)$ via Schur functors.
\end{remark}

\subsection{Lower bound}\label{section: lowerbound} We next exhibit examples of abelian threefolds defined over $\Q$ realizing each of the 33 maximal groups described in Remark \ref{remark: maximalgroups}.  

\subsubsection{Indecomposable cases} The unique group of absolute type $\bA$ is realized by any abelian threefold with trivial endomorphism ring,
and in particular by the Jacobian of a generic genus-$3$ curve.
For a concrete example of absolute type $\bA$, by a theorem of Zarhin \cite[Theorem~2.1]{Za00}, the Jacobian of the hyperelliptic curve $y^2 = x^7 - x + 1$ has trivial endomorphism ring.

The unique maximal group of absolute type $\bB$ is realized by the Jacobian of a generic Picard curve. 
For a concrete example of absolute type $\bB$, we take an example of Upton \cite[\S 6.1.3]{Upt09}: the Jacobian of the Picard curve $C: y^3 = x^4 + x + 1$ has maximal $67$-adic Galois image. Note that Upton's result uses a calculation of Frobenius traces plus a big image criterion in the manner of \cite{Vas04}; in general, we do not know of a good way to compute directly with the mod-$\ell$ Galois representation for any prime $\ell$ other than $2$, for which the Galois action on bitangents provides a computable model.

\subsubsection{Split products} Note that the maximal split products are all of the form $G=G_1\times G_2$, where $G_1=N(\Unitary(1))$ or $\SU(2)$, and 
$$
G_2=\USp(4),\, N(\SU(2)\times \SU(2)),\, F_{ac},\,F_{a,b},\, J(E_4),\,J(E_6),\,J(D_6),\,\text{ or } J(O).
$$ 
Fix one such $G_2$. As shown by \cite[Table 11]{FKRS12}, there exists a principally polarized abelian surface~$S$ defined over $\Q$ with $\ST(S)\simeq G_2$. If $G_1=\SU(2)$, then $G$ is realized by the product of $S$ with an elliptic curve without CM defined over $\Q$; if $G_1=N(\Unitary(1))$, then $G$ is realized by the product of $S$ with an elliptic curve defined over $\Q$ with CM by a quadratic imaginary field not contained in the endomorphism field of $S$. One readily verifies that examples of elliptic curves satisfying this condition can be easily found.

\subsubsection{Triple products}\label{section: tripleprodreal} The unique maximal group for the absolute type $\bE$ can be realized in the following manner. Let $E$ be an elliptic curve without CM defined over a non Galois cubic extension $F/\Q$ which is not a $\Q$-curve (that is, there exists $\sigma \in G_\Q$ such that $E$ and $\acc{\sigma}E$ are not $\Qbar$-isogenous). It immediately follows that if $\sigma$ and $\sigma'$ are different embeddings of $F$ into $\Qbar$, then $\acc\sigma E$ and $\acc{\sigma'}E$ are not $\Qbar$-isogenous, and therefore the maximal group for the absolute type $\bE$ is realized by taking the Weil restriction of scalars of $E$ from $F$ down to $\Q$.

We next realize the maximal groups of absolute type $\bH$:
\begin{itemize}
\item The product of three elliptic curves defined over $\Q$ with CM by pairwise non-isomorphic imaginary quadratic fields realizes the maximal group with group of components $\langle a,b,c\rangle$.
\item Let $S$ be a principally polarized abelian surface over $\Q$ with CM by a quartic CM field (as the example given in \cite[Table 11]{FKRS12} realizing the group $F_{ac}$, for example). Let $E$ be an elliptic curve defined over $\Q$ with CM by a quadratic imaginary field not contained in the endomorphism field of $S$. Then $E\times S$ realizes the maximal group with group of components $\langle at,c\rangle$.
\item As discussed in \S\ref{section: upperbound}, the maximal group with group of components $\langle abc,s\rangle$ is realized by a principally polarized abelian threefold with CM by a cyclic CM field of degree 6. As a concrete example, we may take the Jacobian of the curve $y^2=x^7+1$, which has CM by the field $\Q(\zeta_7)$.
\end{itemize}

\subsubsection{The diagonal product $\SU(2)_3$.}\label{section: diagonalprodreal} 
We next realize the two maximal groups of absolute type $\bM$. We will use the construction of Example \ref{example: twistcube}, but this does not yield examples carrying a principal polarization; we thus supplement the discussion with some explicit examples that are principally polarized.

In order to realize $\SU(2)_3[\sym 4]$, let $L/\Q$ denote a Galois extension with Galois group $\sym 4$ and let 
$$
\xi\colon \Gal(L/\Q)\rightarrow \GL_3(\Z)
$$
denote a faithful integral degree $3$ representation. Let $E$ be an elliptic curve without complex multiplication defined over $\Q$ and let $A$ be the twist of $E^3$ by $\xi$. From the fact that $\xi$ has projective image $\sym 4$, we easily deduce that $A$ has Sato--Tate group $\SU(2)_3[\sym 4]$.

Alternatively, note that this group occurs for the Jacobian of a generic twist of the curve $y^2 = x^8 + 14x^4 + 1$, which has an automorphism group of order 48.
See for example \cite[Table~4, row~11]{ACLLM18}.

In order to realize $\SU(2)_3[\dih 6]$, let $L/\Q$ denote a Galois extension with Galois group $\dih 6$ and let 
$$
\xi\colon \Gal(L/\Q)\rightarrow \GL_2(\Z)
$$
denote a faithful integral degree $2$ representation. Let $E$ be an elliptic curve without complex multiplication defined over $\Q$, set $S=E^2$, and let $S_\xi$ be the twist of $S$ by $\xi$. Since the projective image of $\xi$ is $\dih 3$, the endomorphism field of $S_\xi$ has Galois group $\dih 3$. Then $\SU(2)_3[\dih 6]$ is realized by the product of $S_\xi$ and a nontrivial quadratic twist $E'$ of $E$ over a quadratic field not contained in the endomorphism field of $S$.

Alternatively, let $C_2$ be the curve $y^2 = x^6 + x^3 + 4$, which is given in \cite[Table~11]{FKRS12} as an example with Sato-Tate group $J(E_3)$. The curve $C_2$ is a twist of $y^2 = x^6 + x^3/2 + 1$, whose
quotient by the involution
\[
(x,y) \mapsto (1/x, y/x^3)
\]
may be identified with the curve $C_1: y^2 = (x+2)(x^3 - 3x + 1/2)$ via the map
$(x,y) \mapsto (x+1/x, y(x+1)/x^2)$; taking the product of $\Jac(C_2)$ with a generic quadratic twist of $\Jac(C_1)$ yields an abelian threefold with Sato-Tate group $\SU(2)_3[\dih 6]$.

\begin{remark}
Note that in both of the constructions realizing $\SU(2)_3[\dih 6]$, we use an abelian surface with Sato-Tate group $J(E_3)$. One may wonder whether it is possible to construct an abelian threefold $A$ defined over $\Q$ with $\ST(A)=\SU(2)_3[\dih 6]$ such that $A$ is the product of an elliptic curve $E$ over $\Q$ and an abelian surface $S$ over $\Q$ with $\ST(S)=J(E_6)$. This seemingly natural construction is ruled out by \cite[\S3D]{FS14}, which shows that an elliptic factor of such an $S$ never admits a model up to isogeny defined over~$\Q$.
\end{remark}

\subsubsection{The diagonal product $\Unitary(1)_3$}\label{section: diagonalprodimag} 

We finally realize the twelve maximal groups of absolute type $\bN$. We do this in a uniform fashion using the presentations given in Table~\ref{table: maximal subgroups U1}. Hereafter, let $H, \tilde{H}$ correspond to a row of the table.

We first verify that the construction of 
Example~\ref{example: twistcube CM} is applicable.
In the cases where $\tilde{H}$ is written as a product, we apply Remark~\ref{remark: twisting and principal polarizations} to the two-dimensional factor. Using the results of \cite{Fei03}, we see the following.
\begin{itemize}
\item
For the factors $G_{12}, G_8$, the integral representation $\rho_0$ is uniquely determined by $\rho$,
so point (i) of Remark~\ref{remark: twisting and principal polarizations} applies.
\item
For the factor $G(3,3,2)$, point (ii) of Remark~\ref{remark: twisting and principal polarizations} applies.
\end{itemize}

To handle the remaining factors, take $\alpha = 3$ if $M = \Q(i)$ or $\alpha=2$ if $M = \Q(\zeta_3)$.
In Table~\ref{table:two-dim generators}, we give explicit generators for the image of one choice of $\rho_0$,
presented so that the reduction modulo $\alpha$ becomes lower triangular.
One may check easily that while the reduction modulo $\alpha$ is manifestly reducible, the reduction modulo $\alpha^2$ admits no invariant vector not divisible by $\alpha$.
We may thus apply \cite[Corollary~1.5]{Fei03} to see that there are exactly two choices for $\rho_0$,
and that these two are exchanged by conjugation by $\begin{pmatrix} \alpha & 0 \\ 0 & 1 \end{pmatrix}$.
Consequently, either these two choices are preserved by $\rho_0 \mapsto c_M \circ \rho_0 \circ c$,
in which case Example~\ref{example: twistcube CM}  applies directly; or they are exchanged and
Remark~\ref{remark: twisting with denominator} applies.

\begin{table}
\begin{tabular}{c|c} 
Factor & Generators \\
\hline
$[G_8:4]$ & $\begin{pmatrix} -1-i & -3 \\ i & 1+i \end{pmatrix}, \begin{pmatrix} 1 & 3 \\ -1 & -2 \end{pmatrix}$\\ \hline
$G(2,1,2)$ & $\begin{pmatrix}
-1 & -2 \\
0 & 1
\end{pmatrix},
\begin{pmatrix}
-1 & 0 \\
1 & 1
\end{pmatrix}$ \\ \hline
$G(6,6,2)$ & $\begin{pmatrix}
1 & 0 \\
-1 & -1
\end{pmatrix},
\begin{pmatrix}
-1 & -3 \\
1 & 2
\end{pmatrix}$ \\ \hline
$G_4$ & $
\begin{pmatrix}
-1 & -2 \\
1 & 1
\end{pmatrix},
\begin{pmatrix} 
-1& -2 \\
0 & \zeta_3
\end{pmatrix}$ \\ \hline
$G_4'$ & 
$\begin{pmatrix}
-1 & -2 \\
1 & 1
\end{pmatrix},
\begin{pmatrix}
-\zeta_3 & -2\zeta_3 \\
0 & \zeta_3^2
\end{pmatrix}
$
\\ \hline
\end{tabular}
\vspace{6pt}
\caption{Presentations of two-dimensional factors of $\tilde{H}$.
}
\label{table:two-dim generators}
\end{table}

\begin{remark}
Although it is not necessary for our present discussion, for future reference we resolve the ambiguity
in the previous discussion.

\begin{itemize}
\item
For the factors $[G_8:4], G(2,1,2), G(6,6,2)$, we are in a situation similar to point (ii) of
Remark~\ref{remark: twisting and principal polarizations}: the choices for $\rho_0$ are fixed by $c_M$
and exchanged by $c$, so we must apply Remark~\ref{remark: twisting with denominator} rather than
 Example~\ref{example: twistcube CM}.

\item
For the factor $G_4'$, we argue as in point (ii) of
Remark~\ref{remark: twisting and principal polarizations}. 
The group $G_4' \cong \langle 24,3 \rangle$ has a unique 2-Sylow 
subgroup 
\[
P = \left\langle \begin{pmatrix} -1 & -2 \\ 1& 1 \end{pmatrix},
\begin{pmatrix} 1 & -2\zeta_3^3 \\
\zeta_3 & -1 \end{pmatrix} \right\rangle.
\]
The two choices of $\rho_0$, restricted to $P$, have reductions modulo 2 which factor through the quotient $\cyc 2 \times \cyc 2$ of $P$; by labeling the elements of $P$ and of $\F_4$ and keeping track of signs of permutations,
we see that the reductions are interchanged by $c_M$.

For the extension by $\Gal(M/\Q)$ in question, the 2-Sylow subgroup is 
$\langle 16, 13 \rangle$. The quotient of this group by the center of $P$ is abelian, so the reductions are preserved by $c$.

Putting this together, we see that the two choices for $\rho_0$ must be exchanged by 
$\rho_0 \mapsto c_M \circ \rho_0 \circ c$, so we must apply Remark~\ref{remark: twisting with denominator} rather than Example~\ref{example: twistcube CM}.

\item
For the factor $G_4$, recall from the explanation of Table~\ref{table: maximal subgroups U1}
that we get the same projective image by taking the factor $G_4'$ and modifying the action of $c$; we may thus reuse the previous analysis. In this case,
the 2-Sylow subgroup is $\langle 16, 8 \rangle$.
The quotient of this group by the center of $P$ is $\dih 4$, so the reductions are interchanged by $c$.
Consequently, the two choices for $\rho_0$ are preserved by 
$\rho_0 \mapsto c_M \circ \rho_0 \circ c$, so we may apply Example~\ref{example: twistcube CM} directly.

\end{itemize}
\end{remark}

In the cases where $\tilde{H}$ is three-dimensional, we again apply the results of \cite{Fei03}. The only case where $\rho_0$ is not uniquely determined by $\rho$ is $\tilde{H} = G_{25}$, in which case there are two choices for $\rho_0$ which are reducible modulo $1-\zeta_3$. However, the two reductions have invariant subspaces of different dimensions, so the map
$\rho_0 \mapsto c_M \circ \rho_0 \circ c$ fixes them both and Example~\ref{example: twistcube CM} applies directly.

From the previous analysis, it follows that we can apply the construction
of Example~\ref{example: twistcube CM} to obtain an abelian threefold $A$ over $\Q$ with $\ST(A)^0 \simeq \Unitary(1)_3$ and $\ST(A)/\ST(A)^0 \simeq H \rtimes \cyc 2$ \emph{provided} that we can solve the 
Galois embedding problem for the pair $(\tilde{H} \rtimes \cyc 2, \tilde{H})$
and the extension $M/\Q$. We first give a uniform solution for the complex reflection groups.

\begin{lemma} \label{lemma: complex reflection}
Consider any row of Table~\ref{table: maximal subgroups U1}
for which $\tilde{H}$ is a complex reflection group (that is,
$\tilde{H} \neq G(1,1,1) \times [G_{8}:4],
G(3,1,1) \times G_4'$).
Let $V = M^3$ be the representation space on which $\GL_3(M)$ acts.
\begin{enumerate}
\item[(a)]
The invariant ring $R = (\Sym V^\vee)^{\tilde{H}}$ is a polynomial ring in $3$ variables over $M$.
\item[(b)]
The ring $R^c$ is a polynomial ring in $3$ variables over $\Q$.
\end{enumerate}
\end{lemma}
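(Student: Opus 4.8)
The plan is to read (a) off the Shephard--Todd--Chevalley theorem and to derive (b) from (a) by Galois descent, using the grading to reduce a potentially delicate ``form of affine space'' question to a statement about regularity.

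For (a), I would first observe that for every row covered by the lemma, $\tilde H$ is a (possibly reducible) genuine complex reflection group and $\rho\colon \tilde H \to \GL_3(M)$ is its reflection representation, realized over its field of traces $M$ (the representation is rational over the field of character values). The notion of a pseudo-reflection---an element of finite order fixing a hyperplane---is insensitive to the base field, since the rank of $\rho(g)-1$ is the same over $M$ as over $\C$; hence the image of $\rho$ is generated by pseudo-reflections already over $M$. The Shephard--Todd--Chevalley theorem, valid over any field in which $\#\tilde H$ is invertible (here characteristic $0$), then gives that $R=(\Sym V^\vee)^{\tilde H}$ is a polynomial ring in $3=\dim_M V$ variables over $M$; in the reducible (product) cases this follows factor by factor, the invariant ring being the tensor product of the factors' invariant rings.

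For (b), the crucial point is that $c$ acts on $R$ as a $c_M$-\emph{semilinear}, grading-preserving ring involution. Indeed, the relation $\rho \cong c_M\circ\rho\circ c$ from the discussion following Table~\ref{table: maximal subgroups U1} lets me realize the generator of $\cyc 2$ as a $c_M$-semilinear involution $\sigma$ of $V$ that normalizes $\rho(\tilde H)$ and induces $c$; this $\sigma$ acts on $V^\vee$, hence on $\Sym V^\vee$ and on $R$, preserving degrees. I would then argue as follows. Viewing $G=\tilde H\rtimes\cyc 2$ as a finite group of $\Q$-algebra automorphisms of the finitely generated $\Q$-algebra $\Sym V^\vee$, Noether's finiteness theorem shows that $R^c=(\Sym V^\vee)^{G}$ is a finitely generated graded connected $\Q$-algebra, and it is a domain since $R^c\subseteq R$. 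Galois descent for the semilinear involution $\sigma$ on the $M$-vector space $R$ produces a graded ring isomorphism $R^c\otimes_\Q M\cong R$. As $R$ is polynomial it is regular, and regularity descends along the faithfully flat extension $\Q\hookrightarrow M$ (equivalently along $R^c\to R^c\otimes_\Q M = R$), so $R^c$ is regular. Finally, a finitely generated graded connected regular domain over a field is a polynomial ring: choosing homogeneous generators lifting a basis of $\mathfrak m/\mathfrak m^2$, whose number equals $\dim R^c=\dim R=3$ by regularity, yields a surjection from $\Q[T_1,T_2,T_3]$ which is injective because both sides are domains of Krull dimension $3$. Hence $R^c=\Q[g_1,g_2,g_3]$.

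The step I expect to be the main obstacle, conceptually, is the final descent: a $\Q$-form of $\mathbb A^3_M$ need not be isomorphic to $\mathbb A^3_\Q$, so it is essential that the whole situation is compatible with the grading (equivalently, the $\mathbb G_m$-action). The grading is precisely what lets me trade the subtle triviality-of-forms question for the clean equivalence, valid for graded connected algebras, between regularity and being polynomial. I would therefore be careful to verify that $\sigma$ preserves the grading (it does, being induced by a linear map on $V^\vee$) and that the descended algebra is genuinely connected, i.e.\ $R^c_0=\Q$.
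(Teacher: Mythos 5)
Your proposal is correct and takes essentially the same approach as the paper: part (a) is the Chevalley--Shephard--Todd theorem (valid for arbitrary complex reflection groups over a field of characteristic $0$), and part (b) is Galois descent from (a), which is precisely the paper's two-line proof. Your extra details for (b) --- the $c_M$-semilinear grading-preserving involution coming from the $c_M$-stable realization of $\tilde{H}$, faithfully flat descent of regularity, and the fact that a finitely generated graded connected regular domain over a field is a polynomial ring --- correctly fill in the forms-of-affine-space subtlety that the paper compresses into the phrase ``Galois descent.''
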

\begin{proof}
To check (a), apply the Chevalley--Shephard--Todd theorem,
which is valid for arbitrary complex reflection groups \cite[\S 7]{ST54}, \cite[Theorem~7.2.1]{Ben11}.
We may deduce (b) from (a) by Galois descent.
\end{proof}

For those rows of Table~\ref{table: maximal subgroups U1} to which Lemma~\ref{lemma: complex reflection} applies,
the affine space corresponding to $R^c$ contains a Zariski-dense open subspace every point of which
defines a solution of the desired embedding problem.

\begin{remark} \label{remark: Hessian as Galois group}
For the various choices of $\tilde{H}$ subsumed in the previous argument, one can also
give more direct solutions of the Galois embedding problem that may be better suited for generating small examples. This is perhaps least obvious when $H$ is the Hessian; in this case,
one has (as pointed out by Noam Elkies) an isomorphism $\tilde{H} \cong \Unitary(3, \F_4)$
via which $\tilde{H}$ occurs as the $2$-division field of a generic Picard curve over $\Q$.
%(see Remark~\ref{remark: Picard mod 2 image}).
\end{remark}

To deal with the remaining cases, we give more direct solutions of the Galois embedding problems in question.
\begin{itemize}
\item
For the case $\tilde{H} = G(1,1,1) \times [G_8:4]$,
we find a Galois extension $L$ of $\Q$ containing $\Q(i)$ with
$\Gal(L/\Q) \cong \langle 48,15 \rangle$, $\Gal(L/\Q(i)) \cong \langle 24, 1 \rangle$. 
By way of motivation, note that the second row of Table~\ref{table: maximal subgroups U1} also represents the abstract group $\langle 48, 15 \rangle$ but with a different presentation: in that case, one takes the semidirect product of $\cyc 3 \times \dih 4$ by $\cyc 2$ acting by an outer automorphism of each factor.
It therefore suffices to take $L$ to be the
compositum of two Galois extensions $L_1/\Q, L_2/\Q$ in which $L_1/\Q$ is an $\sym 3$-extension,
$L_2/\Q$ is a $\dih 8$-extension containing $\Q(i)$ as the fixed subfield of one copy of $\dih 4$,
and the fixed field of the other copy of $\dih 4$ coincides with the quadratic subfield of $L_1/\Q$.

To confirm that this can be achieved, we give an explicit example.
The degree-8 number field
\href{https://www.lmfdb.org/NumberField/8.2.85525504.1}{\texttt{8.2.85525504.1}}
contains $\Q(\sqrt{17})$, its discriminant is $-1$ times a square,
and its Galois group is $\dih 8$; we may thus take $L_2$ to be the Galois closure of this field.
The degree-3 number field
\href{https://www.lmfdb.org/NumberField/3.3.2057.1}{\texttt{3.3.2057.1}}
has discriminant which is $17$ times a square and its Galois group is $\sym 3$;
we may thus take $L_1$ to be the Galois closure of this field.

\item
For the case $\tilde{H} = G(3,1,1) \times G_4'$,
we find a Galois extension $L$ of $\Q$ containing $\Q(\zeta_3)$ with
$\Gal(L/\Q) \cong \langle 144, 125 \rangle$, $\Gal(L/\Q(\zeta_3)) \cong \langle 72,25 \rangle$. 
It suffices to take $L$ to be the compositum of two Galois extensions $L_1/\Q, L_2/\Q$ in which $L_1/\Q$ is an $\sym 3$-extension containing $\Q(\zeta_3)$ and $L_2/\Q$ is an extension containing $\Q(\zeta_3)$ with
$\Gal(L/\Q) \cong \langle 48,33 \rangle$, $\Gal(L/\Q(\zeta_3)) \cong \langle 24, 3 \rangle$.

To confirm that this can be achieved, we give an explicit example.
We take $L_1 = \Q(\zeta_3, 2^{1/3})$. To choose $L_2$, we consult the
Kl\"uners-Malle database \cite{KM01}, which includes the polynomial
\begin{gather*}
x^{16} - x^{15} - 2x^{14} + 8x^{13} + 10x^{12} - 19x^{11} - 4x^{10} + 64x^9 +\\
12x^8 - 94x^7 + 38x^6 + 119x^5 - 64x^4 - 97x^3 + 48x^2 + 11x + 1
\end{gather*}
as an entry for the permutation representation 16T60 (in \Gap\ notation).
Using \Magma, one may confirm that this polynomial is irreducible 
with Galois group $\langle 48, 33 \rangle$, and that the unique quadratic subfield of the splitting field 
is $\Q(\zeta_3)$.
\end{itemize}

\bibliographystyle{amsalpha}

\end{document}